\DeclareMathOperator*{\Ran}{Ran}
\DeclareMathOperator*{\Ker}{Ker}
\DeclareMathOperator*{\Fix}{Fix}
\DeclareMathOperator*{\dist}{dist}
\newcommand{\dd}{\mathrm{d}}
\newcommand{\ep}{\varepsilon}
\renewcommand{\L}{L_n^{(1)}}
\newcommand{\TT}{\mathbb{T}}
\newcommand{\CC}{\mathbb{C}}
\newcommand{\ZZ}{\mathbb{Z}}
\newcommand{\ga}{\alpha}
\newcommand{\inv}{^{-1}}
\newtheorem{thm}{Theorem}[section]
\newtheorem{prp}[thm]{Proposition}
\newtheorem{lem}[thm]{Lemma}
\theoremstyle{definition}
\newtheorem{rem}[thm]{Remark}
\newtheorem{rems}[thm]{Remarks}
\numberwithin{equation}{section} 
\begin{document}

\title[The asymptotic behaviour of the Cesàro operator]{The asymptotic behaviour of\\ the Cesàro operator}

\author[A.K.J. Pritchard and D. Seifert]{Andrew K.~J.~Pritchard and David Seifert}
\address{School of Mathematics, Statistics and Physics, Newcastle University, Herschel Building, Newcastle upon Tyne, NE1 7RU, United Kingdom}
\email[A.K.J.\ Pritchard]{a.k.j.pritchard2@ncl.ac.uk}
\email[D.\ Seifert]{david.seifert@ncl.ac.uk}

\begin{abstract}
We study the asymptotic behaviour of orbits $(T^nx)_{n\ge0}$ of the classical Cesàro operator $T$ for sequences $x$ in the Banach space $c$ of convergent sequences. We give new non-probabilistic proofs, based on the Katznelson--Tzafriri theorem and one of its quantified variants, of results which characterise the set of sequences $x\in c$ that lead to convergent orbits and, for sequences satisfying a simple additional condition, we provide a rate of convergence. These results are then shown, again by operator-theoretic techniques, to be optimal in different ways. Finally, we study the asymptotic behaviour of the Cesàro operator defined on spaces of continuous functions, establishing new and improved results in this setting, too.
\end{abstract}

\subjclass{47B37, 47B38;  33C45}

\keywords{
Cesàro operator, 
convergence of orbits,
rate of convergence,
Katznelson--Tzafriri theorem,
generalised Laguerre polynomials}

\maketitle

\section{Introduction}\label{sec:int}

Let $c$ be the complex Banach space of complex convergent sequences endowed with the supremum norm.
We consider the classical \emph{Cesàro operator} $T\colon c\to c$ defined by $Tx=(\phi_k(x))_{k\ge0}$, where
\begin{equation}\label{eq:phi}
\phi_k(x)=\frac{1}{k+1}\sum_{j=0}^kx_j,\qquad k\ge0,
\end{equation}
for $x=(x_k)_{k\ge0}\in c$. It well known that $T$ is then a well-defined bounded linear operator on $c$, of norm $\|T\|=1$.
Our primary interest is in the convergence properties of the orbits $(T^nx)_{n\ge0}$ of the Cesàro operator for different sequences $x\in c$.  It was shown by Galaz Fontes and Solís in~\cite[Thm.~1]{GalSol08} that the orbit $(T^nx)_{n\ge0}$ converges in norm to a limit as $n\to\infty$ if and only if the sequence $x=(x_k)_{k\ge0}\in c$ satisfies $x_0=\lim_{k\to\infty}x_k$. Shortly afterwards, Adell and Lekuona showed in~\cite[Thm.~1.3]{AdeLek10} that for sequences satisfying a certain additional integral condition the convergence occurs at the rate $n^{-1/2}$ as $n\to\infty$. In both cases, the proof is based on the representation of powers of $T$ as so-called \emph{moment-difference operators} corresponding to a certain measure, and the rate of convergence in~\cite[Thm.~1.3]{AdeLek10} is obtained by means of a probabilistic argument based on the Berry--Esseen theorem, a quantified version of the central limit theorem. In two of our main results,  Theorems~\ref{thm:conv} and~\ref{thm:rate}, we give new operator-theoretic proofs of these results based on the famous Katznelson--Tzafriri theorem in operator theory and a result on generalised Laguerre polynomials. This new approach leads to a simpler additional condition for the convergence to occur at the rate $n^{-1/2}$ as $n\to\infty$, namely that the series $\sum_{k=1}^\infty\frac{x_k-x_0}{k}$ converges, and we show that our condition is implied by the integral condition in~\cite[Thm.~1.3]{AdeLek10}. In addition, in Proposition~\ref{prp:opt} we establish that our quantified convergence result is optimal both in the sense that the rate $n^{-1/2}$ as $n\to\infty$ cannot be improved and that this rate cannot hold for all convergent orbits. On the other hand, Proposition~\ref{prp:rapid} shows that the set of sequences that lead to convergent orbits contains a dense subspace for which the convergence is in fact more rapid than any polynomial rate.

One important benefit of our functional-analytic approach is that it carries over with only minor modifications to natural `continuous' analogues of the Cesàro operator. Thus in Section~\ref{sec:cont} we outline how our arguments can be adapted to both the Cesàro operator on $C[0,1]$ and to the Cesàro operator on suitable spaces of functions on the half-line $[0,\infty)$. In Theorem~\ref{thm:interval} we give the first result in the literature describing a set of functions $f\in C[0,1]$ for which the orbit $(T^nf)_{n\ge0}$ of the Cesàro operator  $T\colon C[0,1]\to C[0,1]$  converges to a limit at the rate $\smash{n^{-1/2}}$ as $n\to\infty$. For the half-line analogue of the Cesàro operator  a result of this type has previously been obtained in~\cite[Thm~1.1]{AdeLek10}, but in Theorem~\ref{thm:half} we prove it for a class including functions not previously covered.

Our notation and terminology are standard throughout. In particular, given a complex Banach space $X$ and a bounded linear operator $T\colon X\to X$, we denote the spectrum of $T$ by $\sigma(T)$, and we write $\Fix T$ for the eigenspace $\Ker (I-T)$ of vectors that are fixed by $T$. We denote the dual operator of $T$ by $T^*\colon X^*\to X^*$, where $X^*$ is the dual space of $X$. Furthermore, if $Y$ is a subset of the dual $X^*$ of a Banach space $X$, we let $Y_\circ=\{x\in X:\phi(x)=0\mbox{ for all }\phi\in Y\}$ denote the annihilator of $Y$ in $X$.

\section{The convergence result}

 Our first main result, Theorem~\ref{thm:conv} below, describes the set of sequences $x\in c$ for which the orbit $(T^nx)_{n\ge0}$ of the Cesàro operator $T\colon c\to c$  converges to a limit as $n\to\infty$ either in norm or, equivalently as it turns out, with respect to the weak topology. The main equivalence between (i) and (iii) was first proved in~\cite[Thm.~1]{GalSol08}, and the authors also described the limit when it exists. The proof given in~\cite{GalSol08} goes through Hardy's interpretation \cite[\S11.12]{Har49} of powers of $T$ as the moment-difference operators corresponding to a certain measure on $[0,1]$, and it relies on a lemma involving elementary calculus and Stirling's formula. Our proof is shorter and purely functional-analytic. It is based on the well-known Katznelson--Tzafriri theorem, which we state for the reader's convenience. Here $\TT=\{z\in\CC:|z|=1\}$ denotes the unit circle in the complex plane, and a bounded linear operator $T\colon X\to X$ on a Banach space $X$ is said to be \emph{power-bounded} if $\sup_{n\ge0}\|T^n\|<\infty$.

\begin{thm}[{\cite[Thm.~1]{KatTza86}}]\label{thm:KT}
Let $X$ be a complex Banach space, and let $T\colon X\to X$ be a power-bounded linear operator. Then $\|T^n(I-T)\|\to0$ as $n\to\infty$ if and only if $\sigma(T)\cap\TT\subseteq\{1\}$.
\end{thm}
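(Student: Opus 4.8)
The plan is to prove the two implications separately. Necessity of the spectral condition is elementary and follows from the spectral mapping theorem; sufficiency is the substantial direction and is where I expect essentially all the difficulty to lie.

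For necessity, suppose $\|T^n(I-T)\|\to0$. Writing $T^n(I-T)=p_n(T)$ with $p_n(z)=z^n(1-z)$, the polynomial spectral mapping theorem gives $\sigma(T^n(I-T))=\{\lambda^n(1-\lambda):\lambda\in\sigma(T)\}$, and comparing the spectral radius with the norm yields $|\lambda^n(1-\lambda)|\le\|T^n(I-T)\|$ for every $\lambda\in\sigma(T)$. If moreover $\lambda\in\TT$ then $|\lambda^n|=1$, so $|1-\lambda|\le\|T^n(I-T)\|\to0$, forcing $\lambda=1$. Hence $\sigma(T)\cap\TT\subseteq\{1\}$.

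For sufficiency, assume $\sigma(T)\cap\TT\subseteq\{1\}$. First I would dispose of the case $\sigma(T)\cap\TT=\emptyset$: then $\sigma(T)$ is a compact subset of the open unit disc, so the spectral radius satisfies $r(T)<1$ and Gelfand's formula gives $\|T^n\|\to0$ geometrically, whence $\|T^n(I-T)\|\le\|I-T\|\,\|T^n\|\to0$. I may therefore assume $\sigma(T)\cap\TT=\{1\}$. Setting $M=\sup_{n\ge0}\|T^n\|<\infty$ and $R(z,T)=(zI-T)\inv$, power-boundedness gives the resolvent bound $\|R(z,T)\|\le M/(|z|-1)$ for $|z|>1$, via the Neumann series $R(z,T)=\sum_{k\ge0}z^{-k-1}T^k$. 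Expanding this series and integrating term by term, I would establish the contour representation
\begin{equation*}
T^n(I-T)=\frac{1}{2\pi i}\oint_{|z|=r}z^n(1-z)R(z,T)\,\dd z,\qquad r>1,
\end{equation*}
in which the factor $1-z$ vanishes precisely at the one point $1$ where $\sigma(T)$ meets the unit circle.

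The crux, and what I expect to be the main obstacle, is to show that this integral tends to $0$ as $n\to\infty$ by letting the radius $r$ decrease to $1$. Away from $z=1$ the resolvent extends continuously to $\TT$, since such points lie in the resolvent set, and the integrand is well behaved there; the difficulty is entirely concentrated near $z=1$, where $R(z,T)$ may grow without bound as $z\to1$ and where the decaying factor $1-z$ must be shown to compensate. Making this rigorous amounts to a Tauberian-type argument---in the spirit of the Ingham--Karamata theorem or the contour technique underlying the Arendt--Batty--Lyubich--Vu theorem---in which the oscillation of $z^n$ on the contour is exploited together with the isolatedness of the single peripheral point $1$. An alternative, and historically the original, route is the harmonic-analytic one: realise $g\mapsto g(T)$ as a bounded algebra homomorphism from the analytic Wiener algebra $A_+$ into the bounded operators, with $\|g(T)\|\le M\|g\|_{A_+}$, and deduce $\|T^n(I-T)\|\to0$ from a spectral-synthesis property of the singleton $\{1\}$ relative to $A_+$; here one must contend with the subtlety that a nonzero element of $A_+$ cannot vanish on a boundary arc, so the localisation away from $1$ is genuinely delicate. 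Either way, the essential point---and the genuine content of the theorem---is the local analysis at the single point of the peripheral spectrum, while the surrounding reductions are routine.
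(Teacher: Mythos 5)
The paper does not prove this statement at all: it is quoted verbatim from Katznelson--Tzafriri \cite{KatTza86} as a known black box, so there is no in-paper argument to compare yours against. Judged on its own terms, your proposal proves only the easy half. The necessity direction is correct and complete: the polynomial spectral mapping theorem applied to $p_n(z)=z^n(1-z)$, together with the inequality $r(S)\le\|S\|$, does give $|1-\lambda|\le\|T^n(I-T)\|$ for every unimodular $\lambda\in\sigma(T)$, and this forces $\sigma(T)\cap\TT\subseteq\{1\}$. The preliminary reductions in the sufficiency direction (disposing of the case $r(T)<1$ via Gelfand's formula, the Neumann-series resolvent bound $\|R(z,T)\|\le M/(|z|-1)$, and the contour representation of $T^n(I-T)$ for $r>1$) are also correct.

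But the sufficiency direction is where the entire content of the theorem lives, and there you stop exactly at the point where a proof would have to begin. You write that ``making this rigorous amounts to a Tauberian-type argument'' and that the harmonic-analytic route requires a ``spectral-synthesis property of the singleton $\{1\}$ relative to $A_+$'' --- both true, but neither is carried out. The obstacle is genuine and cannot be waved away: as $r\to1$ your own bound $\|R(z,T)\|\le M/(|z|-1)$ degenerates everywhere on the contour, not just near $z=1$, so one cannot simply pass to the limit in the integral; and in the algebra $A_+$ of absolutely convergent Taylor series the function $1-z$ does \emph{not} obviously lie in the closed ideal generated by functions vanishing near $1$, precisely because of the boundary-uniqueness phenomenon you mention. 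Every known proof (the original one via spectral synthesis for the Wiener algebra, the Allan--Ransford/Vũ functional-analytic arguments, or the quantified resolvent approaches surveyed in \cite{BatSei22}) supplies a specific device to overcome this, and your write-up supplies none. As it stands the proposal is a correct statement of the proof \emph{strategy} with the decisive step missing, so it cannot be accepted as a proof of the theorem.
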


One advantage of having a proof based on Theorem~\ref{thm:KT} is that there are sharp quantified versions of the Katznelson--Tzafiriri theorem.
 We shall use such a result in Section~\ref{sec:rates} below to give a precise description of the \emph{rate} of convergence one can expect in Theorem~\ref{thm:conv},  based on an auxiliary fact about generalised Laguerre polynomials and on  operator-theoretic ideas. For a recent general survey of results relating to the Katznelson--Tzafriri theorem we refer the reader to~\cite{BatSei22}. 

For $k\in\ZZ_+\cup\{\infty\}$ we define the bounded linear functionals $\pi_k\in c^*$  by $\pi_k(x)=x_k$ when $k\in\ZZ_+$ and by $\pi_\infty(x)= \lim_{k\to\infty}x_k$  for all $x=(x_k)_{k\ge0}\in c$.  In addition, we write $P\colon c\to c$ for the operator defined by $Px=\pi_0(x)e_\infty$ for $x\in c$, where $e_\infty=(1,1,1,\dots)$.  Our main convergence result may be stated as follows.

\begin{thm}\label{thm:conv}
Let  $T\colon c\to c$ be the Cesàro operator, and let $x=(x_k)_{k\ge0}\in c$. The following are equivalent:
\begin{enumerate}[(i)]
\item[\textup{(i)}] $x_0=\lim_{k\to\infty}x_k$;
\item[\textup{(ii)}] $(T^nx)_{n\ge0}$ converges weakly as $n\to\infty$;
\item[\textup{(iii)}] $(T^nx)_{n\ge0}$ converges in norm as $n\to\infty$.
\end{enumerate}
Furthermore, if these conditions hold then 
\begin{equation}\label{eq:limit}
\|T^nx-Px\|_\infty\to0,\qquad n\to\infty.
\end{equation}
\end{thm}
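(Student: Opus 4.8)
The plan is to derive all three equivalences, and the limit formula, from the Katznelson--Tzafriri theorem (Theorem~\ref{thm:KT}) together with the elementary duality between closed ranges and kernels. Since $\|T\|=1$, the operator $T$ is power-bounded, so the decisive hypothesis to verify is the spectral condition $\sigma(T)\cap\TT\subseteq\{1\}$. I would establish this by showing that $\lambda I-T$ is invertible on $c$ for every $\lambda\in\TT$ with $\lambda\ne1$. Writing $(\lambda I-T)x=y$ in terms of the partial sums $S_k=\sum_{j=0}^kx_j$ turns the equation into the first-order recursion $[\lambda(k+1)-1]S_k=\lambda(k+1)S_{k-1}+(k+1)y_k$, whose coefficient $\lambda(k+1)-1$ never vanishes when $\lambda\in\TT\setminus\{1\}$. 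Solving it explicitly and using that the associated homogeneous products behave like $k^{1/\lambda}$, with $|k^{1/\lambda}|=k^{\R(1/\lambda)}=k^{\R\lambda}$ for $\lambda\in\TT$ and $\R\lambda<1$ whenever $\lambda\neq1$, one checks that the bounded solution $x$ lies in $c$ and depends continuously on $y$. This resolvent estimate is the main obstacle in the whole argument; everything else is soft. Granting it, Theorem~\ref{thm:KT} yields $\|T^n(I-T)\|\to0$ as $n\to\infty$.

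The second ingredient is the identification of the relevant invariant subspaces, which is entirely elementary. A direct check shows $\Fix T=\operatorname{span}\{e_\infty\}$, while the functionals $\pi_0$ and $\pi_\infty$ satisfy $T^*\pi_0=\pi_0$ and $T^*\pi_\infty=\pi_\infty$, since $\pi_0(Tx)=x_0$ and $\pi_\infty(Tx)=\lim_kx_k$ for all $x\in c$ (the Cesàro means of a convergent sequence share its limit). Using the representation of $c^*$ and the relation $T^*\pi_k=\frac{1}{k+1}\sum_{j=0}^k\pi_j$, I would solve the fixed-point equation $T^*\phi=\phi$ coordinatewise; it reduces to the recursion $\gamma_{j+1}=\frac{j}{j+1}\gamma_j$ for the $\ell^1$-coefficients, which forces $\gamma_j=0$ for $j\ge1$ and hence $\Fix T^*=\operatorname{span}\{\pi_0,\pi_\infty\}$. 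By the standard duality $\overline{\Ran(I-T)}=(\Ker(I-T^*))_\circ$, this gives the clean description $\overline{\Ran(I-T)}=\{y\in c:y_0=0=\lim_ky_k\}$.

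With these facts in hand the equivalences follow quickly. For (i)$\Rightarrow$(iii), write $x=x_0e_\infty+y$ with $y=x-x_0e_\infty$: condition (i) says precisely that $\lim_ky_k=0$, and since also $y_0=0$ we have $y\in\overline{\Ran(I-T)}$. As $\|T^n(I-T)\|\to0$ and $T$ is power-bounded, a routine density argument gives $T^ny\to0$, whence $T^nx\to x_0e_\infty=Px$ in norm; this proves (iii) and the limit formula~\eqref{eq:limit}. The implication (iii)$\Rightarrow$(ii) is immediate. For (ii)$\Rightarrow$(i), suppose $T^nx\rightharpoonup z$; applying the weakly continuous operator $T$ and using uniqueness of weak limits gives $Tz=z$, so $z=\mu e_\infty$ for some $\mu\in\CC$. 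Since $\pi_0(T^nx)=x_0$ and $\pi_\infty(T^nx)=\lim_kx_k$ are constant in $n$, passing to the weak limit yields $x_0=\mu=\lim_kx_k$, which is~(i).
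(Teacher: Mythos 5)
Your proof is correct and follows essentially the same route as the paper: the Katznelson--Tzafriri theorem applied via the spectral fact $\sigma(T)\cap\TT=\{1\}$, the identifications $\Fix T=\langle e_\infty\rangle$ and $\Fix T^*=\langle\pi_0,\pi_\infty\rangle$, the annihilator description of the closure of $\Ran(I-T)$, and the decomposition $x=Px+(x-Px)$. The only difference is that you sketch self-contained proofs of the two facts the paper simply cites (the spectrum, from Leibowitz/Reade/Wenger, and $\Fix T^*$, from Leibowitz); your $\ell^1$-recursion for $\Fix T^*$ is complete, but be aware that in your resolvent argument the step ``the bounded solution lies in $c$ and depends continuously on $y$'' is precisely where the real estimates live, so as written that part remains an outline rather than a proof.
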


\begin{proof}
Let $x\in c$. It is clear that (iii) implies (ii). Suppose then that~(ii) holds and that $y\in c$ is such that $T^nx\to y$ weakly as $n\to\infty$. Then the sequence $(T^{n+1}x)_{n\ge0}$ converges weakly to both $Ty$ and to $y$  as $n\to\infty$. Since the weak topology is Hausdorff, we deduce that $y\in\Fix T=\langle e_\infty\rangle$. Thus $y$ is a constant sequence and in particular $\pi_\infty(y)=\pi_0(y)$. Now $\Fix T^*=\langle \pi_0,\pi_\infty\rangle$, for instance by \cite[Thm.~5]{Lei72}, and in particular $\pi_0(x)=\pi_0(T^nx)$ and $\pi_\infty(x)=\pi_\infty(T^nx)$ for all $n\ge0$.
By weak convergence, letting $n\to\infty$ gives $\pi_0(x)=\pi_0(y)=\pi_\infty(y)=\pi_\infty(x)$, so $x\in \Ker(\pi_0-\pi_\infty)$ and hence (ii) implies (i). 
In order to prove that (i) implies~(iii) we need to show that $\Ker(\pi_0-\pi_\infty)\subseteq X$, where
\begin{equation}\label{eq:X}
X=\big\{x\in c:(T^nx)_{n\ge0} \mbox{ converges in norm as $n\to\infty$}\big\}.
\end{equation}
 We begin by establishing the inclusion $\Ker(\pi_0-\pi_\infty)\subseteq Y\oplus\langle e_\infty\rangle$, where $Y$ denotes the norm-closure of $\Ran(I-T)$ in $c$.  Indeed, given $x\in \Ker(\pi_0-\pi_\infty)$ let $y=x-Px$. Then certainly $\pi_0(y)=0$, and moreover $\pi_\infty(y)=\pi_\infty(x)-\pi_0(x)=0$. Since $\Fix T^*=\langle \pi_0,\pi_\infty\rangle$ it follows from the Hahn--Banach theorem that $y\in Y$, and hence $x\in Y\oplus\langle e_\infty\rangle$ as required. Next we observe that the Cesàro operator, being a contraction, is  power-bounded. Moreover $\sigma(T)=\{z\in\CC:|z-\frac12|\le\frac12\}$, and hence $\sigma(T)\cap\TT=\{1\}$; see for instance~\cite{Lei72,Rea85, Wen75}. Thus  $\|T^n(I-T)\|\to0$ as $n\to\infty$ by Theorem~\ref{thm:KT}. It follows that $\Ran(I-T)\subseteq X$, and a simple approximation argument shows that $Y\subseteq X$, too. Since $e_\infty\in\Fix T$ we have $\langle e_\infty\rangle\subseteq X$, so $\Ker(\pi_0-\pi_\infty)\subseteq Y\oplus\langle e_\infty\rangle\subseteq X$. This completes the proof of the equivalence of (i), (ii) and (iii). From the proof of the implication (ii)$\implies$(i) it is clear that the limit, when it exists, must be as described in~\eqref{eq:limit}.
\end{proof}

\begin{rems}\phantomsection\label{rem1}
\begin{enumerate}[(a)]
\item The proof in fact establishes  that $X= Y\oplus\langle e_\infty\rangle,$ where $X$ is as defined in~\eqref{eq:X} and $Y$ denotes the norm-closure of $\Ran(I-T)$. This fact will be useful in Section~\ref{sec:rates} below. 
\item A similar argument to that used in the above proof shows that if $S\colon c^*\to c^*$ is the dual of the Cesàro operator then, given any $\phi\in c^*$, 
$$\bigg\|S^n\phi-\bigg(\sum_{k=0}^\infty \phi(e_k)\bigg)\pi_0-\bigg(\phi(e_\infty)-\sum_{k=0}^\infty \phi(e_k)\bigg)\pi_\infty\bigg\|\to0$$
as $n\to\infty$.
Indeed,  let $Q\colon c^*\to c^*$ be the map given by $Q\phi=\xi_0(\phi) \pi_0+\xi_\infty(\phi)\pi_\infty$, where $\xi_0,\xi_\infty\in c^{**}$ are defined by
$\xi_0(\phi)=\sum_{k=0}^\infty \phi(e_k)$
and $ \xi_\infty(\phi)=\phi(e_\infty)-\xi_0(\phi)$, respectively, for all $\phi\in c^*$. Then $Q$ is bounded, $Q^2=Q$ and $\Ran Q=\Fix S=\langle \pi_0,\pi_\infty\rangle$. Furthermore, 
$\Ker Q=\Ker\xi_0\cap\Ker\xi_\infty=(\Fix S^*)_\circ$.  
Thus $Q$ is the projection onto $\langle\pi_0,\pi_\infty\rangle$ along the closure of $\Ran(I-S)$. Since $\|S\|=\|T\|=1$ and $\sigma(S)=\sigma(T)$,  Theorem~\ref{thm:KT} yields $S^n\to Q$ as $n\to\infty$, in the strong operator topology. This result is an analogue of~\cite[Thm.~2]{GalSol08} for the Cesàro operator on $c$.
\item Applying part~(b) with $\phi=\pi_k$ we see that $\pi_k(T^nx)\to x_0$ as $n\to\infty$ for all $k\in\ZZ_+$ and all $x=(x_k)_{k\ge0}\in c$, uniformly for $x$ in any bounded subset of $c$.  This in particular gives another way of proving that condition~(i) in Theorem~\ref{thm:conv} follows from either (ii) or (iii).
\end{enumerate}
\end{rems}

\section{Rates of convergence}\label{sec:rates}

In this section we strengthen the implication (i)$\implies$(iii) in Theorem~\ref{thm:conv} by showing that for sequences $x\in c$ that satisfy the condition in (i) as well as an additional assumption we obtain a \emph{rate} of convergence in (iii). A quantified result of this type was first obtained in~\cite{AdeLek10}. We state their result in the following form.

\begin{thm}[{\cite[Thm.~1.3]{AdeLek10}}]\label{thm:AdeLek}
Let  $T\colon c\to c$ be the Cesàro operator, and let $x=(x_k)_{k\ge0}\in c$ be such that $x_0=\lim_{k\to\infty}x_k$ and 
\begin{equation}\label{eq:int}
\int_0^\infty e^{-t}\bigg|\sum_{k=1}^\infty (x_k-x_0)\frac{t^{k-1}}{k!}\bigg|\,\dd t<\infty.
\end{equation}
Then $\|T^nx-Px\|_\infty=O(n^{-1/2})$ as  $n\to\infty.$
\end{thm}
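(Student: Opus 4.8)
The plan is to deduce the rate $O(n^{-1/2})$ from a quantified version of the Katznelson--Tzafriri theorem, combined with the structural decomposition $X=Y\oplus\langle e_\infty\rangle$ established in Remark~\ref{rem1}(a). The key observation is that since $\sigma(T)$ is the closed disc $\{z:|z-\tfrac12|\le\tfrac12\}$, which touches the unit circle at $z=1$ tangentially, one expects the quantified decay $\|T^n(I-T)\|=O(n^{-1/2})$: the spectrum meets $\TT$ only at $1$, and the geometry of the disc near $1$ governs the exponent. Sharp quantified Katznelson--Tzafriri estimates (see the survey~\cite{BatSei22}) relate the rate of decay of $\|T^n(I-T)\|$ to the rate at which the resolvent blows up as one approaches the critical point $z=1$; the tangential contact of the disc should yield precisely the $n^{-1/2}$ rate rather than the faster rates available under transversal contact.

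First I would make this resolvent estimate precise: for $z\in\TT$ near $1$, control $\|(z-T)\inv\|$ in terms of $|z-1|$, using that the distance from $z$ to the disc $\sigma(T)$ behaves like $|z-1|^2$ because of the tangency at $1$. Feeding this into the appropriate quantified variant of Theorem~\ref{thm:KT} would give $\|T^n(I-T)\|=O(n^{-1/2})$. Second, given $x\in c$ with $x_0=\lim_k x_k$, I would write $x=y+Px$ with $y=x-Px\in Y=\overline{\Ran(I-T)}$, so that $T^nx-Px=T^ny$ and it remains to bound $\|T^ny\|_\infty$. The role of the integral condition~\eqref{eq:int} is to guarantee not merely that $y$ lies in the closure of $\Ran(I-T)$ but that $y$ lies in $\Ran(I-T)$ itself, i.e.\ that $y=(I-T)w$ for some $w\in c$; then $\|T^ny\|=\|T^n(I-T)w\|\le\|T^n(I-T)\|\,\|w\|_\infty=O(n^{-1/2})$.

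The main obstacle, and the step requiring the most care, is identifying $w$ explicitly and verifying $w\in c$ precisely under hypothesis~\eqref{eq:int}. Here the generalised Laguerre polynomials advertised in the introduction should enter: solving $(I-T)w=y$ coordinatewise for the Cesàro operator leads to a representation of $w_k$ as a sum against the coefficients $x_j-x_0$, and the integral in~\eqref{eq:int} is exactly the quantity controlling $\sup_k|w_k|$ and the convergence of $(w_k)$ via the Laguerre generating function $\sum_k \tfrac{t^k}{k!}$ appearing in the integrand. I would therefore invert $I-T$ formally, recognise the resulting kernel in terms of Laguerre polynomials, and show that~\eqref{eq:int} bounds $\|w\|_\infty$ and forces $w\in c$. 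The final rate then follows immediately by combining this with the quantified estimate $\|T^n(I-T)\|=O(n^{-1/2})$ from the first step.
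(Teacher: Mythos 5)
Your overall skeleton is the correct one and matches the paper's strategy: reduce the problem to showing that $\|T^n(I-T)\|=O(n^{-1/2})$ and that $x-Px$ lies in $\Ran(I-T)$ itself rather than merely in its closure. However, both key steps have genuine gaps as you have set them up. First, the resolvent route does not deliver the stated rate. Knowing that $\sigma(T)$ is the disc $\{z:|z-\frac12|\le\frac12\}$ gives only the \emph{lower} bound $\|(e^{i\theta}I-T)\inv\|\ge\dist(e^{i\theta},\sigma(T))\inv\gtrsim|\theta|^{-2}$; an upper bound of the same order requires the explicit resolvent formulas of Reade and Wenger. More importantly, even granted $\|(e^{i\theta}I-T)\inv\|=O(|\theta|^{-2})$, the sharp quantified Katznelson--Tzafriri theorems on a general Banach space (for instance \cite[Thm.~2.11]{Sei16}) yield only $\|T^n(I-T)\|=O(\log(n)^{1/2}n^{-1/2})$; the paper points out precisely this in a remark after Theorem~\ref{thm:rate}, and the general Banach-space results carry this logarithmic loss. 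The clean rate is instead obtained from Theorem~\ref{thm:Dun} (Foguel--Weiss/Nevanlinna/Dungey): it suffices to show that $T_\alpha=(1-\alpha)\inv(T-\alpha I)$ is power-bounded for some $\alpha\in(0,1)$, and \emph{this} is where the generalised Laguerre polynomials enter, via Hardy's integral representation of $\pi_k(T^nx)$ and the asymptotics of $\int_0^\infty e^{-\alpha t}|L_{n-1}^{(1)}(t)|\,\dd t$ established in Lemma~\ref{lem:Laguerre}.

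Second, your account of the membership step misplaces the difficulty. Inverting $I-T$ is elementary and involves no Laguerre polynomials: Lemma~\ref{lem:range} shows that $y\in\Ran(I-T)$ if and only if $y_0=\lim_{k\to\infty}y_k=0$ and $\sum_{k\ge1}y_k/k$ converges, with an explicit coordinatewise formula for a preimage, and boundedness of that preimage is immediate from convergence of the series. The real content in passing from the hypothesis~\eqref{eq:int} to membership of $x-Px$ in $\Ran(I-T)$ is Tauberian: the integral in~\eqref{eq:int} is a Borel--Abel-type mean of the series $\sum_{k\ge1}(x_k-x_0)/k$, whose terms are $O(k^{-1})$ since $(x_k)_{k\ge0}$ is bounded, and one needs Littlewood's Tauberian theorem to conclude that the series itself converges; this is Lemma~\ref{lem:series}, after which Theorem~\ref{thm:rate} applies. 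Your proposal does not identify this step, and the suggestion that~\eqref{eq:int} directly bounds $\|w\|_\infty$ via a Laguerre generating function is not a viable substitute for it.
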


In fact, the condition in~\eqref{eq:int} is replaced in~\cite[Thm.~1.3]{AdeLek10} by a slightly more complicated integrability condition, from which~\eqref{eq:int} follows immediately. Furthermore, \cite[Thm.~1.3]{AdeLek10} includes \emph{explicit} constants in the quantified decay statement. The proof of Theorem~\ref{thm:AdeLek} given in~\cite{AdeLek10} takes as its starting point a probabilistic interpretation of powers of the Cesàro operator; the decay rate with explicit constants then comes out of a quantified version of the central limit theorem known as the Berry--Esseen theorem.

We shall give a simpler proof of the convergence rate obtained in Theorem~\ref{thm:AdeLek} by combining the representation of powers of the Cesàro operator as moment-difference operators with a property of generalised Laguerre polynomials (Lemma~\ref{lem:Laguerre} below) and results from operator theory. Our result applies  to a class of sequences that is characterised by a simpler  and potentially weaker  condition than~\eqref{eq:int}. It may be stated as follows.

\begin{thm}\label{thm:rate}
Let  $T\colon c\to c$ be the Cesàro operator, and let $x=(x_k)_{k\ge0}\in c$ be such that $x_0=\lim_{k\to\infty}x_k$ and the series 
\begin{equation}\label{eq:series_cond}
\sum_{k=1}^\infty\frac{x_k-x_0}{k}
\end{equation} 
converges. Then $\|T^nx-Px\|_\infty=O(n^{-1/2})$ as  $n\to\infty.$
\end{thm}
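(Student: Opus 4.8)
The plan is to reduce the assertion to the single operator-norm estimate $\|T^n(I-T)\|=O(n^{-1/2})$ and then to establish this estimate from the moment-difference representation of the powers of $T$ together with a sharp quantified form of the Katznelson--Tzafriri theorem.

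First I would carry out the reduction. Put $y=x-Px$, so that $y_k=x_k-x_0$ with $y_0=0$ and $y_k\to0$; since $e_\infty\in\Fix T$ we have $T^nx-Px=T^ny$, and by Theorem~\ref{thm:conv} (cf.\ Remark~\ref{rem1}(a)) the sequence $y$ already lies in the norm-closure $Y$ of $\Ran(I-T)$. The point of the hypothesis on the series in~\eqref{eq:series_cond} is to upgrade this to $y\in\Ran(I-T)$ itself. To see this I would solve $(I-T)z=y$ explicitly: writing $u_k=(k+1)\inv\sum_{j=0}^k z_j$, the equation collapses to the telescoping relation $u_k-u_{k-1}=y_k/k$, so that $u_k=u_0+\sum_{j=1}^k y_j/j$ and $z_k=y_k+u_k$. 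Precisely because the series $\sum_{k\ge1}y_k/k=\sum_{k\ge1}(x_k-x_0)/k$ converges and $y_k\to0$, the sequence $z$ converges and hence lies in $c$, giving $y=(I-T)z\in\Ran(I-T)$. Consequently $\|T^nx-Px\|_\infty=\|T^n(I-T)z\|_\infty\le\|T^n(I-T)\|\,\|z\|_\infty$, and as $z$ is fixed it remains only to bound $\|T^n(I-T)\|$.

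For the main estimate I would use Hardy's description~\cite{Har49} of $T^n$ as a Hausdorff (moment-difference) operator: one has $(T^nx)_k=\int_0^1\big(\sum_{j=0}^k\binom{k}{j}t^j(1-t)^{k-j}x_j\big)\,\dd\mu_n(t)$, where $\mu_n$ is the probability measure on $[0,1]$ with density $(-\log t)^{n-1}/(n-1)!$. Summing the geometric series $\sum_{n\ge0}\lambda^{-n-1}\mu_n$ then makes the resolvent explicit: for $\lambda\notin\sigma(T)$ the operator $(\lambda-T)\inv$ is again of Hausdorff type, carrying the atom $\lambda\inv\delta_1$ together with the density $\lambda^{-2}t^{-1/\lambda}$ on $(0,1)$. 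Since a Hausdorff operator is bounded on $c$ by the total variation of its measure, this yields $\|(\lambda-T)\inv\|\le|\lambda|\inv+|\lambda|^{-2}(1-\R(1/\lambda))\inv$; taking $\lambda=e^{is}$ and using $1-\cos s\asymp s^2$ as $s\to0$ gives the resolvent bound $\|(e^{is}-T)\inv\|=O(|s|^{-2})$. A sharp quantified version of the Katznelson--Tzafriri theorem (see the survey~\cite{BatSei22}) then converts a resolvent singularity of order $|s|^{-2}$ at the point $1$ into the decay $\|T^n(I-T)\|=O(n^{-1/2})$, which combined with the reduction above proves the theorem.

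I expect the crux to be the passage from the explicit resolvent bound to the decay rate with the correct exponent: one must invoke the quantified theorem in a form sharp enough to turn an order-$2$ singularity into exactly $n^{-1/2}$, with no spurious logarithmic loss, and one must confirm that the estimate, which is most naturally obtained on $\ell^\infty$, descends to $c$. As an independent check on the exponent, the same rate can be read off directly from the moment representation, since $T^n(I-T)=H_{\mu_n-\mu_{n+1}}$ gives $\|T^n(I-T)\|\le\int_0^1\big|f_{\mu_n}(t)-f_{\mu_{n+1}}(t)\big|\,\dd t=2\,n^ne^{-n}/n!=O(n^{-1/2})$ by Stirling's formula; an estimate of this kind, recast through the generalised Laguerre polynomials $\L$, is presumably the content of Lemma~\ref{lem:Laguerre}.
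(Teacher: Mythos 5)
Your reduction is exactly the paper's: solving $(I-T)z=y$ explicitly for $y=x-Px$, with $u_k-u_{k-1}=y_k/k$, is precisely Lemma~\ref{lem:range}, and it correctly converts convergence of the series in~\eqref{eq:series_cond} into $x-Px\in\Ran(I-T)$, leaving only the estimate $\|T^n(I-T)\|=O(n^{-1/2})$. The problem lies in your primary route to that estimate. The resolvent bound $\|(e^{is}I-T)\inv\|=O(|s|^{-2})$ is correct (and your derivation of it via the Hausdorff representation of the resolvent is fine), but no quantified Katznelson--Tzafriri theorem on a general Banach space converts a resolvent singularity of order $|s|^{-2}$ into $O(n^{-1/2})$ without loss: the available results \cite[Thm.~2.5]{Sei15}, \cite[Thm.~2.11]{Sei16} give only $O(\log(n)^{1/2}n^{-1/2})$, exactly as the paper notes in the remark following its proof, and the logarithm is only known to be removable on Hilbert spaces \cite{NgSei20}, which $c$ is not. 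So the step you yourself identify as the crux is a genuine gap, not a technicality. The paper's actual mechanism is different: it invokes Theorem~\ref{thm:Dun}, reducing the $n^{-1/2}$ rate to power-boundedness of $T_\alpha=(1-\alpha)\inv(T-\alpha I)$ for some $\alpha\in(0,1)$, and verifies this for $\alpha\in(0,1/2)$ by expanding $T_\alpha^n$ under the moment representation and applying the asymptotics $\int_0^\infty e^{-\alpha t}|L_n^{(1)}(t)|\,\dd t\sim((1-\alpha)/\alpha)^{n+1}$ of Lemma~\ref{lem:Laguerre}; that lemma is not the total-variation estimate you guess it to be.

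That said, your closing ``independent check'' is in fact a complete and correct proof of the key estimate, and you should promote it to the main argument. Since $|G_x^{(k)}(s)|\le\|x\|_\infty$, the operator norm of $T^n-T^{n+1}$ is at most the total variation of $\mu_n-\mu_{n+1}$, and after the substitution $u=\log(1/s)$ this equals $\int_0^\infty e^{-u}\bigl|\tfrac{u^{n-1}}{(n-1)!}-\tfrac{u^n}{n!}\bigr|\,\dd u=\int_0^\infty|h_n'(u)|\,\dd u$ with $h_n(u)=e^{-u}u^n/n!$; since $h_n$ vanishes at $0$ and $\infty$ and is unimodal with maximum at $u=n$, this is $2h_n(n)=2n^ne^{-n}/n!=O(n^{-1/2})$ by Stirling. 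This is shorter and more elementary than the paper's route (no Theorem~\ref{thm:Dun}, no Laguerre polynomials), and it adapts equally well to $C[0,1]$ and $C_\infty[0,\infty)$ via Boyd's representation \cite{Boy68}. What the paper's heavier machinery buys is a reusable operator-theoretic criterion (power-boundedness of $T_\alpha$) rather than a bare norm computation, but for the statement at hand your direct estimate suffices. Had you led with it, the proof would be complete.
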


Before proving Theorem~\ref{thm:rate}, we show that the series in~\eqref{eq:series_cond} is convergent whenever~\eqref{eq:int} holds, ensuring that Theorem~\ref{thm:rate} is at least as strong as Theorem~\ref{thm:AdeLek}. This follows by taking $a_k=(x_{k+1}-x_0)/(k+1)$ for $k\ge0$ in the following result. As mentioned in Remark~\ref{rem:Hardy}(b) below, this is a special case of a much stronger result due to Hardy~\cite{Har49}.

\begin{lem}\label{lem:series}
Let $(a_k)_{k\ge0}$ be a sequence of complex numbers such that $a_k=O(k^{-1})$ as $k\to\infty$, and define $f\colon[0,\infty)\to\CC$ by 
$$f(t)= e^{-t}\sum_{k=0}^\infty a_k\frac{t^{k}}{k!},\qquad t\ge0.$$
If $f\in L^1(0,\infty)$ then the series $\sum_{k=0}^\infty a_k$ is convergent and 
\begin{equation}\label{eq:series}
\sum_{k=0}^\infty a_k=\int_0^\infty\! f(t) \,\dd t.
\end{equation}
\end{lem}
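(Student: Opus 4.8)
\emph{Proposal.} The plan is to recognise the integral condition as a statement of \emph{Abel summability} and then to upgrade this to genuine convergence by means of a Tauberian theorem, the hypothesis $a_k=O(k^{-1})$ being exactly the relevant Tauberian condition. First I would note that $a_k=O(k^{-1})$ makes $F(t):=\sum_{k\ge0}a_k t^k/k!$ converge for every $t\ge0$, so that $f=e^{-t}F$ is well defined and continuous. For $s>1$ the bound $\sum_{k\ge0}|a_k|\,s^{-(k+1)}<\infty$ justifies Fubini's theorem, giving
\begin{equation*}
\sum_{k=0}^\infty a_k s^{-k}=s\int_0^\infty e^{-st}F(t)\,\dd t.
\end{equation*}
Since $|e^{-st}F(t)|=e^{-(s-1)t}|f(t)|\le|f(t)|$ for $s\ge1$, and $f\in L^1(0,\infty)$ by hypothesis, dominated convergence lets me send $s\to1^+$ on the right-hand side, whose limit is $I:=\int_0^\infty f(t)\,\dd t$. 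Writing $r=1/s$, this shows that $\sum_{k=0}^\infty a_k r^k\to I$ as $r\to1^-$; that is, $\sum_{k\ge0}a_k$ is Abel summable to $I$.

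It then remains to deduce that the series actually converges, with the same sum. This is precisely the content of the Hardy--Littlewood $O$-Tauberian theorem: Abel summability together with $a_k=O(k^{-1})$ forces $\sum_{k\ge0}a_k$ to converge to its Abel sum, which here is $I$. This gives~\eqref{eq:series}. The Tauberian step is the one genuinely hard ingredient and is exactly where the growth restriction on $a_k$ is used, since Abel summability alone does not imply convergence and so the hypothesis on $a_k$ cannot be dropped.

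If a self-contained argument is preferred to an appeal to the Tauberian theorem, I would instead work with the partial sums $s_n=\sum_{k=0}^n a_k$ directly. Integrating term by term on $[0,T]$ (legitimate by uniform convergence there) gives $\int_0^T f=\sum_{k\ge0}a_k\gamma_k(T)$ with $\gamma_k(T)=\int_0^T e^{-t}t^k/k!\,\dd t$, and Abel summation, whose boundary term vanishes because $|s_N|=O(\log N)$ while $\gamma_N(T)$ decays super-polynomially, recasts this as $\int_0^T f=\sum_{k\ge0}s_k\,w_k(T)$ with $w_k(T)=e^{-T}T^{k+1}/(k+1)!$ and $\sum_{k\ge0}w_k(T)=1-e^{-T}$. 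Subtracting yields the identity
\begin{equation*}
s_n(1-e^{-T})-\int_0^T f(t)\,\dd t=\sum_{k=0}^\infty (s_n-s_k)\,w_k(T),
\end{equation*}
and the crux is then to take $T=n$ and show that the right-hand side is $o(1)$. Splitting at $|k-n|\le n^{2/3}$, on the bulk $|s_n-s_k|$ is a harmonic sum of at most $n^{2/3}$ terms each of size $O(n^{-1})$, hence $O(n^{-1/3})$, while the weights sum to at most $1$; on the tail the logarithmic growth of $|s_n-s_k|$ is overwhelmed by a Chernoff bound $\sum_{|k-n|>n^{2/3}}w_k(n)\le 2e^{-cn^{1/3}}$ for the Poisson weights. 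Both contributions vanish, so $s_n\to I$ as before. The delicate balance between this slow growth of the partial sums and the Poisson tail is the main obstacle along this route.
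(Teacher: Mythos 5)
Your main argument is exactly the paper's proof: both establish the identity $\sum_{k\ge0}a_k r^{-(k+1)}=\int_0^\infty e^{-rt}F(t)\,\dd t$ via Tonelli--Fubini, pass to the limit $r\to1^+$ by dominated convergence using $|e^{-rt}F(t)|\le|f(t)|$, and then invoke Littlewood's $O(1/k)$ Tauberian theorem to upgrade Abel summability to convergence. The proposal is correct, and the additional self-contained sketch is a reasonable (if more laborious) substitute for the citation of Littlewood's theorem, but it is not needed.
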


\begin{proof}
For $r>1$ we have
$$\sum_{k=0}^\infty \frac{a_k}{r^{k+1}}=\sum_{k=0}^\infty \frac{a_k}{k!}\int_0^\infty e^{-rt}t^{k} \,\dd t.$$
Let $K=\sup_{k\ge0}(k+1)|a_k|$. Then $K<\infty$ by assumption and 
$$\int_0^\infty e^{-rt}\sum_{k=0}^\infty|a_k|\frac{t^{k}}{k!}\,\dd t\le K\int_0^\infty e^{-rt}\,\frac{e^{t}-1}{t}\,\dd t<\infty,\qquad r>1,$$ 
so by the theorems of Tonelli and Fubini we may interchange the order of summation and integration to obtain
$$\sum_{k=0}^\infty \frac{a_k}{r^{k+1}}=\int_0^\infty e^{-rt} \sum_{k=0}^\infty a_k \frac{t^{k}}{k!} \,\dd t,\qquad r>1.$$
Let 
$$f_r(t)=e^{-rt} \sum_{k=0}^\infty a_k \frac{t^{k}}{k!},\qquad r>1,\ t\ge0.$$
Then $|f_r(t)|\le|f(t)|$ for all $r>1$,  $t\ge0,$ and $f_r(t)\to f(t)$ as $r\to1+$ for all $t\ge0$, so  
$$\lim_{r\to1+}\sum_{k=0}^\infty \frac{a_k}{r^{k+1}}=\int_0^\infty\! f(t) \,\dd t$$
by the dominated convergence theorem. Now Littlewood's theorem~\cite{Lit11} yields the result.
\end{proof}

\begin{rems}\label{rem:Hardy}
\begin{enumerate}[(a)]
\item For the purposes of deducing convergence of the series in~\eqref{eq:series_cond} from~\eqref{eq:int} it would suffice to have Lemma~\ref{lem:series} only for $a_k=o(k^{-1})$ as $k\to\infty$. In this case the application of Littlewood's theorem could be replaced by an application of Tauber's  theorem~\cite{Tau97}. 
\item The conclusion of Lemma~\ref{lem:series} remains true under the much weaker conditions that the integral in~\eqref{eq:series} exists as an improper Riemann integral and $a_k=O(k^{-1/2})$ as $k\to\infty$; see~\cite[Thm.~126 \& Thm.~156]{Har49}. The proof of this  result is considerably more involved than that of Lemma~\ref{lem:series}.
\item It follows, conversely, from~\cite[Thm.~122]{Har49} that the integral in~\eqref{eq:series} exists as an improper Riemann integral whenever the series $\sum_{k=0}^\infty a_k$ is convergent, and in this case, too,~\eqref{eq:series} holds. There are large classes of sequences $(a_k)_{k\ge0}$ for which it is even possible to show that $f\in L^1(0,\infty)$, and in particular this holds whenever $(a_k)_{k\ge0}\in\ell^1$. However, we do not know whether this converse of Lemma~\ref{lem:series} holds in general, even under the stronger assumption that  $a_k=o(k^{-1})$ as $k\to\infty$. In particular, we leave open whether requiring convergence of the series in~\eqref{eq:series_cond} is \emph{strictly} weaker than~\eqref{eq:int}. 
\end{enumerate}
\end{rems}

 We continue with an elementary yet important observation.

\begin{lem}\label{lem:range}
 Let $T\colon c\to c$ be the Cesàro operator, and let $x=(x_k)_{k\ge0}\in c$. Then $x\in\Ran(I-T)$ if and only if $x_0=\lim_{k\to\infty}x_k=0$ and the series $\sum_{k=1}^\infty \frac{x_k}{k}$ converges.
\end{lem}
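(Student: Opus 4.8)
The plan is to work directly with the coordinates of $(I-T)y$ for $y\in c$ and to exploit a telescoping identity coming from the Cesàro means. For $y=(y_k)_{k\ge0}\in c$ I would write $a_k=\phi_k(y)$ for its $k$-th Cesàro mean and $L=\lim_{k\to\infty}y_k$; since the Cesàro means of a convergent sequence converge to the same limit, $a_k\to L$ as $k\to\infty$. The first observation is that $((I-T)y)_0=y_0-\phi_0(y)=0$, and that $((I-T)y)_k=y_k-a_k$ with $y_k\to L$ and $a_k\to L$, so that $x=(I-T)y$ vanishes at $0$ and tends to $0$. This already yields the two limit conditions in the statement.

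For the series condition I would recover $y_k$ from the partial sums $S_k=\sum_{j=0}^k y_j=(k+1)a_k$, which gives $y_k=(k+1)a_k-ka_{k-1}$ for $k\ge1$ and hence the key identity $((I-T)y)_k=y_k-a_k=k(a_k-a_{k-1})$. Dividing by $k$ makes the partial sums telescope, $\sum_{k=1}^n\frac{((I-T)y)_k}{k}=\sum_{k=1}^n(a_k-a_{k-1})=a_n-a_0$, and letting $n\to\infty$ shows that $\sum_{k\ge1}\frac{x_k}{k}$ converges (to $L-a_0=L-y_0$) whenever $x=(I-T)y$. This completes the forward implication.

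For the converse, given $x\in c$ with $x_0=\lim_k x_k=0$ and $\sum_{k\ge1}\frac{x_k}{k}$ convergent, I would simply reverse this construction. Set $a_k=\sum_{j=1}^k\frac{x_j}{j}$, with $a_0=0$, which converges by hypothesis, and define $y$ by $y_0=0$ and $y_k=(k+1)a_k-ka_{k-1}$ for $k\ge1$. By design $\sum_{j=0}^k y_j=(k+1)a_k$, so $\phi_k(y)=a_k$ and therefore $((I-T)y)_k=k(a_k-a_{k-1})=x_k$, i.e.\ $(I-T)y=x$. The only point needing checking is that $y\in c$; but $y_k=a_k+x_k$, and since $a_k$ converges and $x_k\to0$ the sequence $y$ is convergent, so $y\in c$ and $x\in\Ran(I-T)$.

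There is no serious obstacle here: once the Cesàro means $a_k=\phi_k(y)$ are introduced, the identity $((I-T)y)_k=k(a_k-a_{k-1})$ does all the work, turning both directions into a telescoping computation. The only mild subtlety is the verification that the $y$ constructed in the converse is genuinely convergent, which is immediate from the rewriting $y_k=a_k+x_k$. Note also that the freedom in the choice of $a_0$ reflects the one-dimensional kernel $\Ker(I-T)=\Fix T=\langle e_\infty\rangle$, so that the preimage $y$ is determined by $x$ only up to an additive constant sequence.
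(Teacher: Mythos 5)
Your proof is correct and takes essentially the same route as the paper's: the identity $((I-T)y)_k=k(\phi_k(y)-\phi_{k-1}(y))$, once unwound, is exactly the paper's formula $y_k=y_0+(1+\frac1k)x_k+\sum_{j=1}^{k-1}\frac{x_j}{j}$, and both directions reduce to the same telescoping computation and the same explicit construction of the preimage. The only cosmetic difference is that you obtain the necessary conditions $x_0=\lim_{k\to\infty}x_k=0$ by direct inspection of $(I-T)y$, whereas the paper deduces them from the inclusion $\Ran(I-T)\subseteq(\Fix T^*)_\circ$.
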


\begin{proof}
Certainly $\Ran(I-T)\subseteq (\Fix T^*)_\circ=\Ker \pi_0\cap\Ker\pi_\infty,$ so if $x=(x_k)_{k\ge0}\in\Ran(I-T)$ then $x_0=\lim_{k\to\infty}x_k=0$.  Furthermore, if $x=y-Ty$ for some $y=(y_k)_{k\ge0}\in c$, then    
\begin{equation}\label{eq:yn}
y_k=y_0+\left(1+\frac1k\right)x_k+\sum_{j=1}^{k-1}\frac{x_j}{j},\qquad k\ge1,
\end{equation}
so the series $\sum_{k=1}^\infty \frac{x_k}{k}$ converges. Conversely, if $x=(x_k)_{k\ge0}\in c$ is such that $x_0=\lim_{k\to\infty}x_k=0$ and the series $\sum_{k=1}^\infty \frac{x_k}{k}$ converges, we may construct a sequence $y=(y_k)_{k\ge0}$ by choosing $y_0\in\CC$ arbitrarily and defining $y_k$ as in \eqref{eq:yn} for $k\ge1$. Then $y\in c$ and $y-Ty=x$, so $x\in\Ran(I-T)$.
\end{proof}

\begin{rem}\label{rem:range}
It is possible to iterate the result in Lemma~\ref{lem:range} and obtain descriptions of the higher-order ranges $\Ran(I-T)^n$ for $n>1$. For instance, a sequence $x=(x_k)_{k\ge0}\in c$ lies in $\Ran(I-T)^2$ if and only if $x_0=\lim_{k\to\infty}x_k=\sum_{k=1}^\infty\frac{x_k}{k}=0$ and  $\lim_{n\to\infty}{\sum_{k=1}^n\log(\frac nk)\frac{x_k}{k}}$ exists.
\end{rem}

Our proof of Theorem~\ref{thm:rate} is based on the following result, which quantifies the rate of decay in Theorem~\ref{thm:KT} for a certain class of operators. The implication (i)$\implies$(ii), which will be crucial for us, was proved in~\cite[Lem.~2.1]{FogWei73} and~\cite[Thm.~4.5.3]{Nev93}, while the converse was later proved in~\cite[Thm~1.2]{Dun08}.

\begin{thm}\label{thm:Dun}
Let $X$ be a complex Banach space, and let $T\colon X\to X$ be a bounded linear operator. The following are equivalent:
\begin{enumerate}[(i)]
\item[\textup{(i)}] There exists $\alpha\in(0,1)$ such that the operator $(1-\alpha)\inv(T-\alpha I)$ is power-bounded;
\item[\textup{(ii)}] $T$ is power-bounded and $\|T^n(I-T)\|=O(n^{-1/2})$ as $n\to\infty$.
\end{enumerate}
\end{thm}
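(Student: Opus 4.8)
The plan is to treat the two implications separately, giving a self-contained and essentially elementary argument for the implication (i)$\implies$(ii) that we actually use in the paper, essentially along the lines of \cite{FogWei73,Nev93}, and to follow the resolvent-theoretic strategy of \cite{Dun08} for the harder converse.

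For (i)$\implies$(ii), suppose that $S=(1-\alpha)\inv(T-\alpha I)$ is power-bounded, say $\|S^k\|\le M$ for all $k\ge0$, and set $q=1-\alpha\in(0,1)$. Writing $T=\alpha I+qS$ and using that $I$ and $S$ commute, the binomial theorem gives $T^n=\sum_{k=0}^n p_k S^k$, where $p_k=\binom nk q^k(1-q)^{n-k}$ is the binomial probability mass function; in particular $\|T^n\|\le M\sum_{k=0}^n p_k=M$, so $T$ is power-bounded. Since $I-T=q(I-S)$, I would compute, by a single summation by parts,
$$T^n(I-T)=q\sum_{k=0}^n p_k\bigl(S^k-S^{k+1}\bigr)=q\Bigl(p_0 I+\sum_{k=1}^n(p_k-p_{k-1})S^k-p_nS^{n+1}\Bigr),$$
whence
$$\|T^n(I-T)\|\le qM\Bigl(p_0+\sum_{k=1}^n|p_k-p_{k-1}|+p_n\Bigr).$$
The key elementary fact is that $(p_k)$ is unimodal, a standard property of the binomial distribution, so that the inner sum telescopes to $2\max_k p_k-p_0-p_n$ and the right-hand side collapses to $2qM\max_k p_k$. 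Since $\max_k p_k=O(n^{-1/2})$ by Stirling's formula (equivalently, by the local central limit theorem), this yields $\|T^n(I-T)\|=O(n^{-1/2})$. This is the direction used later, and it is the argument I would present in full.

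For the converse (ii)$\implies$(i), the natural first step is to reformulate the conclusion: since $S^n=(1-\alpha)^{-n}(T-\alpha I)^n$, power-boundedness of $S=(1-\alpha)\inv(T-\alpha I)$ is equivalent to a uniform bound $\|(T-\alpha I)^n\|\le C(1-\alpha)^n$ for a single well-chosen $\alpha\in(0,1)$. Here lies the main obstacle: expanding $(T-\alpha I)^n=\sum_{m=0}^n\binom nm(-\alpha)^{n-m}T^m$ and using only $\|T^m\|\le M$ gives the far too large estimate $M(1+\alpha)^n$, so one must exploit substantial cancellation in this alternating sum. The role of the hypothesis $\|T^n(I-T)\|=O(n^{-1/2})$ is to pin down the behaviour of the resolvent $(\lambda I-T)\inv$ as $\lambda$ approaches the boundary point $1$ — the only point at which $\sigma(T)$ can meet $\TT$, by Theorem~\ref{thm:KT} — tightly enough to place $\sigma(T)$ inside a disc $\{z:|z-\alpha|\le 1-\alpha\}$ tangent to $\TT$ at $1$ and, crucially, to upgrade that spectral containment to the uniform power estimate. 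I would follow the resolvent-and-summation analysis of \cite{Dun08} to carry this out; since the converse is not needed for our applications, I would be content simply to cite \cite{Dun08} for it in the paper.
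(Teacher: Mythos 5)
Your argument is correct, and it is worth noting that the paper does not actually prove Theorem~\ref{thm:Dun}: it attributes the implication (i)$\implies$(ii) to \cite[Lem.~2.1]{FogWei73} and \cite[Thm.~4.5.3]{Nev93} and the converse to \cite[Thm.~1.2]{Dun08}, and then uses only the forward direction. What you supply for (i)$\implies$(ii) is a complete, self-contained proof, and it is essentially the classical argument underlying the cited sources: the expansion $T^n=\sum_{k=0}^n p_kS^k$ with $p_k=\binom{n}{k}q^k(1-q)^{n-k}$, Abel summation, the telescoping of $\sum_{k=1}^n|p_k-p_{k-1}|$ to $2\max_kp_k-p_0-p_n$ via unimodality of the binomial weights, and the local estimate $\max_kp_k=O(n^{-1/2})$ from Stirling's formula. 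The details all check out: unimodality follows from the fact that the ratio $p_{k+1}/p_k=\frac{(n-k)q}{(k+1)(1-q)}$ is decreasing in $k$, and the telescoping identity remains valid when the mode sits at an endpoint, so the bound $\|T^n(I-T)\|\le 2qM\max_kp_k$ is justified. Your handling of the converse---identifying the real obstacle (the cancellation needed in the alternating sum for $(T-\alpha I)^n$, which must be extracted from resolvent behaviour near the point $1$) and then deferring to \cite{Dun08}---coincides with the paper's choice and is appropriate, since only (i)$\implies$(ii) is needed for Theorem~\ref{thm:rate}. The only substantive difference is that you make the easy direction explicit where the paper merely cites it; your version buys self-containedness at the cost of a page, while the paper's citation keeps the focus on the Ces\`aro-specific verification of condition~(i).
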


We shall prove Theorem~\ref{thm:rate} by establishing condition~(i) of Theorem~\ref{thm:Dun} when $T$ is the Cesàro operator. In order to do so we require an auxiliary result on the \emph{generalised Laguerre polynomials} $\smash{L_n^{(1)}}$ defined for $n\ge0$ by
$$\L(t)=\sum_{k=0}^n\binom{n+1}{k+1}\frac{(-1)^k}{k!}t^k.$$
The following fact may well be known in the right circles but, since we have been unable to locate a reference, we include a  proof.

\begin{lem}\label{lem:Laguerre}
For $\alpha\in(0,1/2)$ we have
$$\int_0^\infty e^{-\ga t}|\L(t)|\,\dd t\sim\left(\frac{1-\ga}{\ga}\right)^{n+1},\qquad n\to\infty.$$
\end{lem}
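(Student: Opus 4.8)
The plan is to compare the integral of $|\L|$ against the \emph{signed} integral of $\L$, which can be evaluated in closed form. Since $\L$ is a polynomial, integrating term by term using $\int_0^\infty e^{-\ga t}t^k\,\dd t=k!\,\ga^{-(k+1)}$ and then reindexing and applying the binomial theorem gives
\begin{equation}\label{eq:signedint}
\int_0^\infty e^{-\ga t}\L(t)\,\dd t=\sum_{k=0}^n\binom{n+1}{k+1}\frac{(-1)^k}{\ga^{k+1}}=1+(-1)^n\left(\frac{1-\ga}{\ga}\right)^{n+1}.
\end{equation}
No interchange of limits is required, as the sum is finite. Because $\ga\in(0,1/2)$ gives $(1-\ga)/\ga>1$, the modulus of the right-hand side of~\eqref{eq:signedint} is asymptotic to $((1-\ga)/\ga)^{n+1}$ as $n\to\infty$, and the triangle inequality $\int_0^\infty e^{-\ga t}|\L(t)|\,\dd t\ge\big|\int_0^\infty e^{-\ga t}\L(t)\,\dd t\big|$ already yields the lower bound in the asserted asymptotic.

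For the matching upper bound, write $s=(-1)^n$ for the sign of the leading coefficient of $\L$, and let $t_n$ denote the largest zero of $\L$. Since $\L$ has no zeros on $(t_n,\infty)$ and there agrees in sign with its leading term, we have $s\L>0$ on $(t_n,\infty)$; hence the nonnegative function $|\L|-s\L$, which equals $2|\L|$ on $\{s\L<0\}$ and vanishes elsewhere, is supported in $(0,t_n)$. Splitting $|\L|=s\L+(|\L|-s\L)$ and using~\eqref{eq:signedint} gives
\begin{equation}\label{eq:split}
\int_0^\infty e^{-\ga t}|\L(t)|\,\dd t=\left(\frac{1-\ga}{\ga}\right)^{n+1}+s+\int_0^{t_n}e^{-\ga t}\big(|\L(t)|-s\L(t)\big)\,\dd t,
\end{equation}
and the last term is at most $2\int_0^{t_n}e^{-\ga t}|\L(t)|\,\dd t$. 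To bound this I would use the classical pointwise estimate $|\L(t)|\le(n+1)e^{t/2}$ for $t\ge0$ together with the standard fact that the largest zero satisfies $t_n\sim4n$ as $n\to\infty$, which give
$$\int_0^{t_n}e^{-\ga t}|\L(t)|\,\dd t\le(n+1)\int_0^{t_n}e^{(1/2-\ga)t}\,\dd t\le\frac{n+1}{\tfrac12-\ga}\,e^{(1/2-\ga)t_n}.$$

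The decisive step is then the elementary inequality $2-4\ga<\log\frac{1-\ga}{\ga}$ for $\ga\in(0,1/2)$: the difference $\log\frac{1-\ga}{\ga}-(2-4\ga)$ is convex on $(0,1/2)$ and vanishes together with its first derivative at $\ga=1/2$, hence is strictly positive on $(0,1/2)$. Since $t_n\sim4n$, this yields $e^{(1/2-\ga)t_n}=e^{(2-4\ga)n+o(n)}$, which grows strictly more slowly than $((1-\ga)/\ga)^{n+1}$, so the polynomial prefactor $n+1$ is harmless and the error term in~\eqref{eq:split} is $o(((1-\ga)/\ga)^{n+1})$; combined with the lower bound this completes the proof. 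The main obstacle is precisely this control of the oscillatory region $(0,t_n)$, and it is here that the hypothesis $\ga<1/2$ is essential: the inequality $2-4\ga<\log\frac{1-\ga}{\ga}$ degenerates to equality as $\ga\to1/2$, so one genuinely needs the sharp leading constant in $t_n\sim4n$, a cruder bound of the form $t_n\le Cn$ with $C>4$ being insufficient for $\ga$ near $1/2$.
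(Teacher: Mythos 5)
Your proof is correct, and its architecture coincides with the paper's own: both arguments evaluate the signed integral exactly via the binomial theorem, obtaining $\int_0^\infty e^{-\ga t}(-1)^n\L(t)\,\dd t=\bigl(\tfrac{1-\ga}{\ga}\bigr)^{n+1}+(-1)^n$, observe that the nonnegative correction $|\L|-(-1)^n\L$ is supported on the interval containing the zeros of $\L$ (all of which lie below $4(n+1)$ by Szeg\H{o}'s bound), and then dispose of the error term using the strict inequality $2-4\ga<\log\tfrac{1-\ga}{\ga}$ on $(0,1/2)$, which the paper states in the equivalent form $\tfrac{\ga}{1-\ga}e^{2-4\ga}<1$. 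The one genuine point of divergence is how the contribution of the oscillatory region is estimated: the paper invokes the Askey--Wainger mean-convergence estimate $\int_0^{4(n+1)}e^{-t/2}|\L(t)|\,\dd t=O(n^{1/2})$, whereas you use the classical pointwise bound $e^{-t/2}|\L(t)|\le \L(0)=n+1$ for $t\ge0$, which produces the cruder polynomial prefactor $O(n)$ in place of $O(n^{1/2})$. Since the exponential gap $e^{2-4\ga}<\tfrac{1-\ga}{\ga}$ absorbs any polynomial factor, this loss is immaterial, and your route is in fact slightly more elementary in that it avoids citing Askey and Wainger. Note also that you only ever use the upper bound $t_n\le 4n+o(n)$ on the largest zero, not the full asymptotic $t_n\sim 4n$, so Szeg\H{o}'s theorem alone suffices there; and your closing remark that the sharp constant $4$ is indispensable as $\ga\to\tfrac12$ is accurate and applies equally to the paper's version of the argument.
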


\begin{proof}
By~\cite[Thm.~6.32]{Sze75} the roots of $\L$ are contained in the interval $(0,4(n+1))$, so $|\L(t)|=(-1)^n\L(t)$ for $t\ge 4(n+1)$. For $\alpha\in(0,1/2)$, it follows that
$$\begin{aligned}
\int_0^\infty e^{-\ga t}\big||\L(t)|-&(-1)^n\L(t)\big|\,\dd t\\&=\int_0^{4(n+1)} e^{(1/2-\ga) t}e^{-t/2}\big||\L(t)|-(-1)^n\L(t)\big|\,\dd t\\
&\le 2e^{(2-4\ga)(n+1)}\int_0^{4(n+1)} e^{-t/2}|\L(t)|\,\dd t.
\end{aligned}
$$
Furthermore, the estimates in~\cite[p.~699]{AskWai65} imply that 
$$\int_0^{4(n+1)} e^{-t/2}|\L(t)|\,\dd t=O(n^{1/2}),\qquad n\to\infty.$$
Note that $\frac{\ga}{1-\ga}e^{2-4\ga}<1$ for $\alpha\in(0,1/2)$, and hence 
$$\left(\frac{\ga}{1-\ga}\right)^{n+1}\int_0^\infty e^{-\ga t}\big||\L(t)|-(-1)^n\L(t)\big|\,\dd t\to0,\qquad n\to\infty.$$
We also have
$$\begin{aligned}
\int_0^\infty e^{-\ga t}(-1)^n\L(t)\,\dd t&=\sum_{k=0}^n\binom{n+1}{k+1}\frac{(-1)^{n+k}}{k!}\int_0^\infty e^{-\ga t}t^k\,\dd t\\
&=\sum_{k=0}^n\binom{n+1}{k+1}\frac{(-1)^{n+k}}{\alpha^{k+1}}\\
&=\left(\frac{1-\ga}{\ga}\right)^{n+1}+(-1)^n\sim\left(\frac{1-\ga}{\ga}\right)^{n+1}
\end{aligned}$$
as $n\to\infty$, so the result  follows.
\end{proof}

The proof of Theorem~\ref{thm:rate} is now straightforward.

\begin{proof}[Proof of Theorem~\ref{thm:rate}]
Let $x=(x_k)_{k\ge0}\in c$ be such that $x_0=\lim_{k\to\infty}x_k$ and  the series in~\eqref{eq:series_cond} is convergent. Then  $x-Px\in\Ran(I-T)$ by Lemma~\ref{lem:range}. Since $T^nx-Px=T^n(x-Px)$ for all $n\ge0$, the result will follow if we can show that $\|T^n(I-T)\|=O(n^{-1/2})$ as $n\to\infty$. By Theorem~\ref{thm:Dun} it suffices to prove that the operator $T_\alpha=(1-\ga)\inv(T-\ga I)$ is power-bounded for some $\ga\in(0,1)$. We show that $T_\ga$ is power-bounded for all $\ga\in(0,1/2)$. For $x\in c$, $n\ge1$ and  $k\ge0$, it follows from \cite[\S11.12]{Har49} that
$$\pi_k(T^n x)=\frac{1}{(n-1)!}\int_0^1\log(s\inv)^{n-1}G_x^{(k)}(s)\,\dd s,$$
where
$$G_x^{(k)}(s)=\sum_{j=0}^k\binom{k}{j}x_js^j(1-s)^{k-j},\qquad 0\le s\le 1.$$
 Consequently,
$$\begin{aligned}
\pi_k(T_\alpha^n x)&=\frac{1}{(1-\ga)^n}\sum_{j=0}^n\binom{n}{j}(-\ga)^{n-j}\pi_k(T^j x)
\\&=\left(\frac{-\ga}{1-\ga}\right)^n\bigg(x_k+\int_0^1\sum_{j=1}^n\binom{n}{j}\frac{(-1)^j}{(j-1)!}\frac{\log(s\inv)^{j-1}}{\ga^j}G_x^{(k)}(s)\,\dd s\bigg)\\
&=\left(\frac{-\ga}{1-\ga}\right)^n\left(x_k-\frac1\ga\int_0^1 L_{n-1}^{(1)}(\ga\inv\log(s\inv))G_x^{(k)}(s)\,\dd s\right)
\\&=\left(\frac{-\ga}{1-\ga}\right)^n\left(x_k-\int_0^\infty e^{-\ga t} L_{n-1}^{(1)}(t)G_x^{(k)}(e^{-\alpha t})\,\dd t\right)
\end{aligned}$$
for all $k\ge0$, $n\ge1$, $x\in c$ and $\ga\in(0,1)$. Here we used the substitution $t=\ga\inv\log(s\inv)$ in the last step. Since $|\smash{G_x^{(k)}(s)}|\le\|x\|_\infty$ for all $s\in[0,1]$, $k\ge0$ and $x\in c$, it follows that
$$\|T_\alpha^n \|\le \left(\frac{\ga}{1-\ga}\right)^n\left(1+\int_0^\infty e^{-\ga t}|L_{n-1}^{(1)}(t)|\,\dd t\right)$$
for all $n\ge1$ and $\alpha\in(0,1)$. Now by Lemma~\ref{lem:Laguerre} the operator $T_\ga$ is power-bounded for $\ga\in(0,1/2)$, so the result follows.
\end{proof}

\begin{rems}
\begin{enumerate}[(a)]
\item It is possible to obtain a slightly slower rate of convergence by means of a different argument which avoids the use of generalised Laguerre polynomials. Indeed, using the formulas for the resolvent of the Cesàro operator $T\colon c\to c$ given for instance in~\cite{Rea85, Wen75} it is relatively straightforward to show that $\|(e^{i\theta}I-T)\inv\|=O(|\theta|^{-2})$ as $|\theta|\to0$. It then follows from \cite[Thm.~2.5]{Sei15} or \cite[Thm.~2.11]{Sei16} that
\begin{equation*}\label{eq:rate}
\|T^n(I-T)\|=O\bigg(\frac{\log(n)^{1/2}}{n^{1/2}}\bigg),\qquad n\to\infty,
\end{equation*}
which reproduces the rate in Theorem~\ref{thm:rate} up to a logarithmic factor.
\item There is a natural analogue of Theorem~\ref{thm:rate} for the Cesàro operator $T$ defined, by the same formula as before, on the Banach space $\ell^\infty$ of bounded sequences, equipped with the supremum norm. Indeed, if $x=(x_k)_{k\ge0}\in\ell^\infty$ satisfies  
$$\sup_{n\ge1}\bigg|\sum_{k=1}^n\frac{x_k-x_0}{k}\bigg|<\infty,$$ 
then $\|T^nx-Px\|_\infty=O(n^{-1/2})$ as  $n\to\infty$, where $Px=x_0 e_\infty$ as before. The proof is essentially the same as that of Theorem~\ref{thm:rate}, requiring only a straightforward modification of Lemma~\ref{lem:range}. 
\end{enumerate}
\end{rems}

Let us write
$$X=\big\{x\in c:(T^nx)_{n\ge0} \mbox{ converges in norm as $n\to\infty$}\big\},$$
as in the proof of Theorem~\ref{thm:conv}, and recall that a sequence $x=(x_k)_{k\ge0}\in c$ lies in $X$ if and only if $x_0=\lim_{k\to\infty}x_k$.
Our next result shows that the rate of convergence obtained in Theorem~\ref{thm:rate} cannot be improved, and that there cannot be a rate that holds for \emph{all} sequences $x\in X$. Similar statements would follow from the lower bounds mentioned in the discussion after~\cite[Thm.~1.3]{AdeLek10}.

\begin{prp}\label{prp:opt}
Let $T\colon c\to c$ be the Cesàro operator.
\begin{enumerate}[(a)]
\item[\textup{(a)}] Suppose that $(r_n)_{n\ge0}$ is a sequence of positive numbers such that $r_n=o(n^{-1/2})$ as $n\to\infty$. Then there exists $x\in X$ such that the series in~\eqref{eq:series_cond} is convergent but 
$$\limsup_{n\to\infty}\,r_n^{-1}\|T^n x-Px\|_\infty=\infty.$$
\item[\textup{(b)}] Suppose that $(r_n)_{n\ge0}$ is a sequence of positive numbers such that $r_n\to0$ as $n\to\infty$. Then there exists $x\in X$ such that $\|T^nx-Px\|_\infty\ge r_n$ for all $n\ge0$.
\end{enumerate}
\end{prp}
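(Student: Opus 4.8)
The plan is to exploit the sharpness of Theorem~\ref{thm:Dun} together with explicit test sequences, treating the two parts separately. For part~(a), the strategy is to show that if \emph{every} sequence $x$ satisfying the hypotheses of Theorem~\ref{thm:rate} were to decay faster than $n^{-1/2}$, then the operator norm $\|T^n(I-T)\|$ itself would decay faster than $n^{-1/2}$, contradicting the optimality built into Theorem~\ref{thm:Dun}. More precisely, I would argue by contradiction and use a uniform boundedness (Banach--Steinhaus) argument. Suppose that $r_n^{-1}\|T^nx-Px\|_\infty$ stays bounded for \emph{every} admissible $x$; since $x-Px$ ranges over (a dense subset of) $\Ran(I-T)$ by Lemma~\ref{lem:range}, and $T^nx-Px=T^n(I-T)y$ for a suitable $y$, one can view the maps $y\mapsto r_n^{-1}T^n(I-T)y$ as a family of bounded operators on the closure $Y$ of $\Ran(I-T)$ that is pointwise bounded, hence uniformly bounded by Banach--Steinhaus. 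This would force $\|T^n(I-T)\|=O(r_n)=o(n^{-1/2})$, so that $T$ would satisfy a decay strictly faster than $n^{-1/2}$; by the equivalence in Theorem~\ref{thm:Dun} (specifically the implication (ii)$\implies$(i) would then hold with an improved rate, or one invokes the known sharpness of the $n^{-1/2}$ rate for the Cesàro operator), this is impossible because the spectral geometry $\sigma(T)\cap\TT=\{1\}$ with the circle $\TT$ tangent to the disc forces exactly the rate $n^{-1/2}$ and no better. The contradiction yields the desired $x$.

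An alternative, more hands-on route for part~(a) is to produce the offending $x$ directly. Since powers of $T$ act as the moment-difference / Laguerre-type operators appearing in the proof of Theorem~\ref{thm:rate}, the quantity $\|T^n(I-T)\|$ is governed by integrals of $|L_{n-1}^{(1)}|$, and the lower bound $\|T^n(I-T)\|\gtrsim n^{-1/2}$ can be read off from the $L^1$-estimates of the Laguerre polynomials (the same estimates from~\cite{AskWai65} used in Lemma~\ref{lem:Laguerre}). One then chooses a single $x$ realising this lower order along a subsequence and thickens it, via a gliding-hump construction, so that $r_n^{-1}\|T^nx-Px\|_\infty\to\infty$ along a sparse sequence of indices while the defining series~\eqref{eq:series_cond} still converges.

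For part~(b), the idea is quite different and elementary: rather than a universal lower bound, I only need one orbit whose convergence is arbitrarily slow, which is a soft statement. The standard device is that the rate of decay of orbits in $X=Y\oplus\langle e_\infty\rangle$ cannot be uniform, because $T^n(I-T)\to0$ strongly but not in a rate-uniform way over the infinite-dimensional space $Y$. Concretely, I would pick a sequence of \emph{normalised} vectors $y^{(j)}\in Y$ and an increasing sequence of indices $n_j$ such that $\|T^{n_j}y^{(j)}\|_\infty$ is not too small relative to $r_{n_j}$, then form $x-Px=\sum_j c_j y^{(j)}$ with rapidly decaying coefficients $c_j$ chosen so that the series converges in $c$ while the contribution of the $j$-th block keeps $\|T^nx-Px\|_\infty\ge r_n$ on the window of indices near $n_j$. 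This is the classical argument that strong convergence to a projection never happens at a rate valid for all vectors; it only requires that no single rate $r_n\to0$ dominates the whole family, which follows again from Banach--Steinhaus (if $\|T^nx-Px\|_\infty\le C r_n$ held for all $x\in X$ it would give norm convergence of $T^n$ to $P$ at rate $r_n$, contradicting that $P$ has infinite-dimensional complementary range on which $T$ is not uniformly contractive).

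The main obstacle will be part~(a): ensuring simultaneously that the constructed $x$ satisfies the convergence of~\eqref{eq:series_cond} (so that it is a legitimate competitor within the hypotheses of Theorem~\ref{thm:rate}) \emph{and} that its orbit fails to decay faster than the prescribed $r_n$. The Banach--Steinhaus route is cleaner but requires care in identifying the correct domain and in citing or re-deriving the genuine lower bound $\|T^n(I-T)\|\gtrsim n^{-1/2}$, which is the real content; the direct construction is more explicit but demands a delicate gliding-hump balancing of the Laguerre integrals against the series condition. Part~(b) is comparatively routine.
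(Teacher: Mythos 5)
Your overall strategy for part~(a) --- reduce to $\|T^n(I-T)\|=O(r_n)$ via Lemma~\ref{lem:range} and the uniform boundedness principle, then contradict a lower bound of order $n^{-1/2}$ --- is exactly the paper's route, but the step you yourself identify as ``the real content'', namely the lower bound $\limsup_{n\to\infty}n^{1/2}\|T^n(I-T)\|>0$, is not actually established. Theorem~\ref{thm:Dun} has no lower-bound content: it is an equivalence between a Ritt-type condition and the upper bound $O(n^{-1/2})$, and nothing in it prevents faster decay, so ``the sharpness built into Theorem~\ref{thm:Dun}'' is not a usable premise. Likewise, the remark that the spectrum is tangent to $\TT$ at $1$ does not by itself force a rate. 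What is needed is a quantitative resolvent lower bound together with a result converting it into a decay lower bound: since $\sigma(T)=\{z:|z-\frac12|\le\frac12\}$ one has $\|(e^{i\theta}I-T)\inv\|\ge \dist(e^{i\theta},\sigma(T))\inv\ge 2|\theta|^{-2}$ for small $\theta\ne0$, and then \cite[Prop.~2.1]{NgSei20} yields $\limsup_{n\to\infty}n^{1/2}\|T^n(I-T)\|>0$. Without this (or a worked-out version of your alternative Laguerre lower-bound route, which you only sketch), part~(a) is incomplete.

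Part~(b) is not ``comparatively routine'', and your argument proves a strictly weaker statement. The Banach--Steinhaus observation at the end only shows that no single rate $r_n$ can dominate all orbits, i.e.\ it produces an $x$ with $\limsup_n r_n^{-1}\|T^nx-Px\|_\infty=\infty$; the proposition demands $\|T^nx-Px\|_\infty\ge r_n$ for \emph{every} $n\ge0$. Your gliding-hump construction, as described, only controls $\|T^nx-Px\|_\infty$ on windows of indices near the $n_j$, and upgrading this to all $n$ simultaneously (while controlling interference between blocks) is essentially a proof of M\"uller's local spectral radius theorem. The paper instead verifies that the restriction $T_0$ of $T$ to the closure $Y$ of $\Ran(I-T)$ has spectral radius $1$ --- which again uses the lower bound $\limsup_n n^{1/2}\|T^n(I-T)\|>0$ from part~(a), since $r(T_0)<1$ would give exponential decay of $\|T^n(I-T)\|$ --- and then invokes \cite[Thm.~1]{Mue88} to obtain $x\in Y$ with $\|T_0^nx\|\ge r_n$ for all $n$. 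You should either cite that theorem or supply the full construction; as written, the pointwise-in-$n$ lower bound is a genuine gap.
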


\begin{proof}
In order to prove part~(a) suppose, to the contrary, that $\|T^n x-Px\|=O(r_n)$ as $n\to\infty$ for all $x\in X$ such that series in~\eqref{eq:series_cond} is convergent. By Lemma~\ref{lem:range} it follows that $\|T^n x\|=O(r_n)$ as $n\to\infty$ for all $x\in\Ran(I-T)$, and hence $\|T^n(I-T)\|=O(r_n)$ as $n\to\infty$ by an application of the uniform boundedness principle. Now since $\sigma(T)=\{z\in\CC:|z-\frac12|\le\frac12\}$ we see that
$$\|(e^{i\theta}-T)\inv\|\ge\frac{1}{\dist(e^{i\theta},\sigma(T))}\ge\frac{1}{|e^{i\theta}-\frac12(1+e^{2i\theta})|}=\frac{1}{1-\cos\theta}\ge\frac{2}{|\theta|^2}$$
for $0<|\theta|\le\pi$, and hence $\limsup_{n\to\infty}n^{1/2}\|T^n(I-T)\|>0$ by~\cite[Prop.~2.1]{NgSei20}. By assumption we have $r_n=o(n^{-1/2})$ as $n\to\infty$, giving the required contradiction.

In order to prove part~(b) we note first that if $T_0$ denotes the restriction of $T$ to $Y$, the closure of $\Ran(I-T)$, then the spectral radius $r(T_0)$ of $T_0$ satisfies $r(T_0)=1$. Indeed, we certainly have $r(T_0)\le\|T\|=1$. On the other hand,  if we had $r(T_0)<1$ then we would obtain $\|T^n(I-T)\|=O(\alpha^n)$ as $n\to\infty$ for some $\alpha\in[0,1)$, contradicting the fact that $\limsup_{n\to\infty}n^{1/2}\|T^n(I-T)\|>0$, as established in the proof of part~(a). It follows from~\cite[Thm.~1]{Mue88} that there exists $x\in Y\subseteq X$ such that $\|T_0^nx\|\ge r_n$ for all $n\ge0$. Since $Y\subseteq \Ker P$ we have  $T_0^nx=T^nx-Px$ for all $n\ge0$, and the result follows.
\end{proof}

\begin{rem}
In fact, \cite[Thm.~1]{Mue88} tells us that, given a sequence $(r_n)_{n\ge0}$ of positive numbers such that $r_n\to0$ as $n\to\infty$ and given any $\ep>0$,  the sequence $x\in c$ in part~(b) may be chosen in  such a way that $\|x\|_\infty<\sup_{n\ge0}r_n+\ep$. Furthermore, using~\cite[Thm.~37.8(ii)]{Mue07} we see that there is a dense subset $Z$ of $X$ such that for every $x\in Z$ we have $\|T^nx-Px\|_\infty\ge r_n$ for all \emph{sufficiently large} $n\ge0$.
\end{rem}

Even though part~(a) of Proposition~\ref{prp:opt} shows that the rate of convergence in Theorem~\ref{thm:rate} cannot be improved for \emph{all} sequences, it is clear from the proof of the latter result that for certain sequences one \emph{can} do better. In particular, if $x\in \Ran(I-T)^k\oplus\langle e_\infty\rangle$ for some $k\ge1$ then, using the fact that $\|T^n(I-T)\|=O(n^{-1/2})$ as $n\to\infty$, we see that $\|T^nx-Px\|=O(n^{-k/2})$ as $n\to\infty$. As noted in Remark~\ref{rem:range}, we may in principle determine whether a sequence $x\in c$ lies in $\Ran(I-T)^k$ for a given $k\ge1$ by iterating the description of $\Ran(I-T)$ given in Lemma~\ref{lem:range}. We know from Remark~\ref{rem1}(a) that $\Ran(I-T)\oplus\langle e_\infty\rangle$ is a dense subspace of $X$. It follows that $\Ran(I-T)^k\oplus\langle e_\infty\rangle$ is a dense subspace of $X$ for every $k\ge1$, and by part~(b) of Proposition~\ref{prp:opt} it is a proper subspace of $X$. In fact, a slightly stronger statement is true, as we now show.  For $x\in X$ we say that $(T^nx)_{n\ge0}$ converges \emph{rapidly} if $\|T^nx-Px\|_\infty=O(n^{-r})$ as $n\to\infty$ for all $r>0$, and we write $X_\infty$ for the set of all $x\in X$ such that $(T^nx)_{n\ge0}$ converges rapidly. 

\begin{prp}\label{prp:rapid}
 The set $X_\infty$ is a dense  subspace of $X$.
\end{prp}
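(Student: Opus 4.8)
The plan is to produce a dense subspace of $X$ every member of which converges rapidly, and to note separately that $X_\infty$ is automatically a linear subspace (a finite linear combination of vectors that are each $O(n^{-r})$ for every $r>0$ is again $O(n^{-r})$ for every $r>0$). Recall from Remark~\ref{rem1}(a) that $X=Y\oplus\langle e_\infty\rangle$ with $Y=\overline{\Ran(I-T)}$, and that $Y=(\Fix T^*)_\circ=\Ker\pi_0\cap\Ker\pi_\infty$. I would take as candidate the subspace $W:=\big(\bigcap_{k\ge1}\Ran(I-T)^k\big)\oplus\langle e_\infty\rangle$. One checks $W\subseteq X_\infty$ directly: $e_\infty$ is fixed by $T$ with $Pe_\infty=e_\infty$, so it converges rapidly (indeed $T^ne_\infty-Pe_\infty=0$); and if $x\in\bigcap_{k\ge1}\Ran(I-T)^k$ then $x\in\Ran(I-T)\subseteq\Ker\pi_0$, so $Px=0$, while by the observation recorded just before the proposition we have $\|T^nx-Px\|_\infty=\|T^nx\|_\infty=O(n^{-k/2})$ for \emph{every} $k\ge1$, whence $x$ converges rapidly. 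Thus the whole statement reduces to showing that $\bigcap_{k\ge1}\Ran(I-T)^k$ is dense in $Y$.

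For the density I would pass to the restriction $S:=(I-T)|_Y\colon Y\to Y$, which is well defined since $\Ran(I-T)\subseteq Y$. A one-line induction from $\Ran(S)\subseteq\Ran(I-T)$ gives $\Ran(S^k)\subseteq\Ran(I-T)^k$ for all $k\ge1$, so it suffices to prove that $\bigcap_{k\ge1}\Ran(S^k)$ is dense in $Y$. The mechanism for this is the Mittag--Leffler theorem, in the form: if $Z$ is a Banach space and $S\colon Z\to Z$ is bounded with \emph{dense range}, then $\bigcap_{k\ge1}\Ran(S^k)$ is dense in $Z$. Applying this with $Z=Y$ reduces everything to the single assertion that $S$ has dense range in $Y$.

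This last point is the heart of the matter. Using the Hahn--Banach theorem together with $Y=(\Fix T^*)_\circ$ and $\Fix T^*=\langle\pi_0,\pi_\infty\rangle$, density of $\Ran(S)$ in $Y$ is equivalent to the statement that every $\phi\in c^*$ with $(I-T^*)\phi\in\Fix T^*$ already lies in $\Fix T^*$; that is, $\Ker(I-T^*)^2=\Ker(I-T^*)$, so that $T^*$ has no Jordan block at the eigenvalue $1$. I would verify this by a concrete computation in $c^*\cong\ell^1$: writing $\phi(x)=\gamma\lim_k x_k+\sum_{k\ge0}\beta_k x_k$ with $\gamma\in\CC$ and $(\beta_k)\in\ell^1$, one finds that $T^*$ leaves $\gamma$ unchanged and sends $(\beta_k)_{k\ge0}$ to $\big(\sum_{m\ge k}\frac{\beta_m}{m+1}\big)_{k\ge0}$. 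The condition $(I-T^*)\phi\in\Fix T^*$ then forces $\beta_k=\sum_{m\ge k}\frac{\beta_m}{m+1}$ for all $k\ge1$, and differencing this relation yields $\beta_{k+1}=\frac{k}{k+1}\beta_k$, i.e.\ $\beta_k=\beta_1/k$ for $k\ge1$. Since the harmonic sequence is not summable, membership in $\ell^1$ forces $\beta_1=0$, hence $\beta_k=0$ for all $k\ge1$, and then $\phi\in\Fix T^*$ as required. The main obstacle is exactly this non‑summability step --- the same harmonic‑series phenomenon that underlies Lemma~\ref{lem:range} --- together with the bookkeeping needed to set up the duality and the reduction to $S$; once dense range of $S$ is in hand, Mittag--Leffler delivers the density of $X_\infty$ immediately.
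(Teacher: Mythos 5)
Your argument is correct and follows essentially the same route as the paper: the same subspace $\bigcap_{k\ge1}\Ran(I-T)^k\oplus\langle e_\infty\rangle$, the same $O(n^{-k/2})$ decay estimates to place it inside $X_\infty$, and Esterle's Mittag--Leffler theorem together with Remark~\ref{rem1}(a) for density. The one point you treat in more detail is the dense-range hypothesis for $(I-T)|_Y$, which the paper leaves implicit in its appeal to Esterle's theorem; your dual computation in $\ell^1$ verifying $\Ker(I-T^*)^2=\Ker(I-T^*)$ is correct, and the same conclusion also follows softly from power-boundedness, since $(I-T)^2\sum_{n=0}^{N-1}\frac{N-n}{N}T^nz=(I-T)z-\frac{1}{N}(Tz-T^{N+1}z)$ shows $\Ran(I-T)\subseteq\overline{\Ran(I-T)^2}$.
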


\begin{proof}
It is easy to see that $X_\infty$ is a subspace of $X$. We prove that $X_\infty$ is dense in $X$. To this end, let $Z=\bigcap_{k=0}^\infty \Ran(I-T)^k\oplus \langle e_\infty\rangle$. As noted above, $\|T^n(I-T)^k\|=O(n^{-k/2})$ as $n\to\infty$ for all $k\ge1$, and hence  $ Z\subseteq X_\infty$. An application of  Esterle's Mittag-Leffler-type  theorem~\cite{Est84} and Remark~\ref{rem1}(a) now shows that $Z$ is dense in $X$, and hence so is $X_\infty$.
\end{proof}

\section{The Cesàro operator on spaces of continuous functions}\label{sec:cont}

In this final section we consider, relatively briefly, two variants of the Cesàro operator which act on spaces of continuous functions. We let $C[0,1]$ denote the space of continuous functions $f\colon[0,1]\to\CC$ and we let $C_\infty[0,\infty)$ denote the space of continuous functions $f\colon[0,\infty)\to\CC$ such that $\lim_{t\to\infty}f(t)$ exists. We endow both spaces with the supremum norm, making them complex Banach spaces, and we define the 
Cesàro operator $T$ on either space by $(Tf)(0)=f(0)$ and 
\begin{equation}\label{eq:Cesaro_cont}
(Tf)(t)=\frac1t\int_0^tf(s)\,\dd s,\qquad t>0.
\end{equation}
In each case   $T$ is a well-defined bounded linear operator of norm  $\|T\|=1$. Motivated by \cite{AdeLek10,GalSol08} we study convergence of orbits $(T^nf)_{n\ge0}$ for functions $f$ lying in either of the spaces $C[0,1]$ or $C_\infty[0,\infty)$. In doing so it will be useful to have  the following facts at our disposal. Note that one of the two implications in part~(a) appears, with the same proof, in~\cite[Prop.~2.4(ii)]{AlBoRi15}.

\begin{lem}\phantomsection\label{lem:range_cont}
\begin{enumerate}[(a)]
\item[\textup{(a)}] Let $T\colon C[0,1]\to C[0,1]$ be the Cesàro operator, and let $f\in C[0,1]$. Then $f\in \Ran(I-T)$ if and only if $f(0)=0$ and the improper Riemann integral
\begin{equation}\label{eq:imp1}
\int_0^1 \frac{f(t)}{t}\,\dd t
\end{equation}
 exists.
\item[\textup{(b)}] Let $T\colon C_\infty[0,\infty)\to C_\infty[0,\infty)$ be the Cesàro operator, and let $f\in C_\infty[0,\infty)$. Then $f\in \Ran(I-T)$ if and only if $f(0)=\lim_{t\to\infty}f(t)=0$ and the improper Riemann integral
\begin{equation}\label{eq:imp2}
\int_0^\infty \frac{f(t)}{t}\,\dd t
\end{equation}
exists.
\end{enumerate}
\end{lem}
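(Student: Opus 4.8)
The plan is to mirror the explicit reconstruction used in the proof of Lemma~\ref{lem:range}, replacing the recursion \eqref{eq:yn} by the solution of a first-order linear ODE. The key observation, valid in both parts, is that if $g$ lies in the relevant space and $t>0$, then writing $G(t)=\int_0^t g(s)\,\dd s$ and differentiating $(Tg)(t)=G(t)/t$ gives
\[
(Tg)'(t)=\frac{g(t)}{t}-\frac{(Tg)(t)}{t}=\frac{(g-Tg)(t)}{t},\qquad t>0.
\]
Thus if $f=g-Tg$ then $(Tg)'(t)=f(t)/t$ on the relevant interval, which is the continuous analogue of the telescoping identity underlying \eqref{eq:yn}.

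For the necessity of the stated conditions, suppose $f=g-Tg$. Evaluating at $t=0$ gives $f(0)=g(0)-(Tg)(0)=0$ at once, and in part~(b) the fact that the Cesàro mean of a function converging to a limit $L:=\lim_{t\to\infty}g(t)$ again converges to $L$ shows that $\lim_{t\to\infty}(Tg)(t)=L$ and hence $\lim_{t\to\infty}f(t)=0$. Integrating the displayed identity over $[\ep,R]$ yields $\int_\ep^R f(s)/s\,\dd s=(Tg)(R)-(Tg)(\ep)$. Letting $\ep\to0+$ and using $(Tg)(\ep)\to(Tg)(0)=g(0)$, which holds by continuity of $Tg$ on $[0,1]$, respectively on $[0,\infty)$, establishes existence of the improper integral in \eqref{eq:imp1} in part~(a); in part~(b) we additionally let $R\to\infty$ and use $(Tg)(R)\to L$ to obtain existence of \eqref{eq:imp2}.

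Conversely, assuming the stated conditions, I would in each case set $h(t)=\int_0^t f(s)/s\,\dd s$, understood as an improper integral near $0$; the hypotheses guarantee that $h$ is well defined and continuous, with $h(0)=0$ and, in part~(b), $h(t)\to\int_0^\infty f(s)/s\,\dd s$ as $t\to\infty$. Fixing any constant $c$ and putting $g(t)=c+h(t)+f(t)$ then yields a continuous function which, in part~(b), satisfies $\lim_{t\to\infty}g(t)=c+\int_0^\infty f(s)/s\,\dd s$, so that $g$ lies in the appropriate space. That $g-Tg=f$ follows once more from the displayed identity: the functions $\int_0^t g(s)\,\dd s$ and $t\,(c+h(t))$ both vanish at $t=0$ and have common derivative $c+h(t)+f(t)=g(t)$ on $(0,\infty)$, hence coincide, giving $(Tg)(t)=c+h(t)$ and therefore $(g-Tg)(t)=f(t)$ for $t>0$, while $(g-Tg)(0)=c-c=0=f(0)$.

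The routine points requiring care are the regularity of $h$ — continuity at $t=0$ via the Cauchy criterion for the improper integral, and, in part~(b), existence of $\lim_{t\to\infty}h(t)$ — together with the verification that the reconstructed $g$ genuinely belongs to $C[0,1]$, respectively $C_\infty[0,\infty)$. The main obstacle, however, is the correct handling of the \emph{improper} (rather than Lebesgue) integrals at the endpoints, since $f(s)/s$ need not be absolutely integrable there; this is exactly why I would phrase the whole argument in terms of limits of proper integrals and exploit the continuity of $Tg$ up to the boundary to transfer the endpoint behaviour back and forth.
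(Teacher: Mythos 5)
Your proposal is correct and follows essentially the same route as the paper: the necessity direction uses the identity $(Tg)'(t)=f(t)/t$ and continuity of $Tg$ at the endpoints, and the converse reconstructs a preimage of the form $(\text{antiderivative of }f(t)/t)+f+\text{const}$, differing from the paper's construction only in the choice of normalising constant and in the labelling of the functions. No gaps; the endpoint care you flag (Cauchy criterion for the improper integral, continuity of $h$ at $0$, and the limit at infinity in part (b)) is exactly what the paper's argument also relies on.
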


\begin{proof}
We prove part (a). Suppose first that $f\in\Ran(I-T)$, so that $f=g-Tg$ for some $g\in C[0,1]$. Since $(Tg)(0)=g(0)$ by definition we see immediately that $f(0)=0$. Moreover, the function $Tg$ is differentiable on $(0,1]$ and its derivative satisfies
$$(Tg)'(t)=\frac{g(t)}{t}-\frac{1}{t^2}\int_0^tg(s)\,\dd s=\frac{f(t)}{t},\qquad 0<t\le 1.$$
Hence  
$$\int_\ep^t\frac{f(s)}{s}\,\dd s=(Tg)(t)-(Tg)(\ep),\qquad 0<\ep\le t\le 1.$$
Since $Tg\in C[0,1]$ the improper Riemann integral in~\eqref{eq:imp1} exists. Conversely, if $f\in C[0,1]$ satisfies $f(0)=0$ and the improper Riemann integral in~\eqref{eq:imp1} exists we may define a function $g\colon[0,1]\to\CC$ by  
\begin{equation}\label{eq:g_def}
g(t)=-\int_t^1\frac{f(s)}{s}\,\dd s,\qquad 0\le t\le 1,
\end{equation}
where the integral is interpreted as an improper Riemann integral when $t=0$.
Then  $g\in C[0,1]$ and we may define $h\in C[0,1]$ by $h=f+g$. The function $g$ is differentiable on $(0,1]$ and satisfies $tg'(t)= f(t)$ for $0<t\le 1$. Hence
$$(Th)(t)=\frac1t\int_0^t\big(f(s)+g(s)\big)\,\dd s=\frac1t\int_0^t\big(sg(s)\big)'\,\dd s=g(t),\qquad 0<t\le 1.$$
Since $f(0)=0$, we also have $(Th)(0)=g(0)$. Thus $Th=g=h-f$ and hence $f=h-Th\in\Ran(I-T)$, as required.

The proof of part~(b) is similar. In particular, we note that if $g\in C_{0,\infty}[0,\infty)$ and if $f=g-Tg$ then $f(0)=\lim_{t\to\infty}f(t)=0$. Existence of the improper Riemann integral in~\eqref{eq:imp2} follows as in part~(a). For the converse implication we simply extend the domain of definition of the function $g$ defined in~\eqref{eq:g_def} to the half-line $[0,\infty)$, so that
$$g(t)=\int_1^t\frac{f(s)}{s}\,\dd s,\qquad t>1.$$
We leave it to the reader to verify the details.
\end{proof}

Our first main result in this section reproduces~\cite[Thm.~3]{GalSol08} concerning the Cesàro operator on $C[0,1]$, and supplements it with a rate of convergence for functions satisfying a certain integrability condition. To our knowledge, the quantified statement has not previously appeared in the literature. 
 Here we let $P\colon C[0,1]\to C[0,1]$ denote the operator defined by $(Pf)(t)=f(0)$ for  $f\in C[0,1]$ and $0\le t\le 1$.

\begin{thm}\label{thm:interval}
Let $T\colon C[0,1]\to C[0,1]$ be the Cesàro operator. Then $\|T^nf-Pf\|_\infty\to0$ as $n\to\infty$ for all $f\in C[0,1]$. Furthermore, if $f\in C[0,1]$ is such that the improper Riemann integral
\begin{equation}\label{eq:imp3}
\int_0^1\frac{f(t)-f(0)}{t}\,\dd t
\end{equation}
exists, then $\|T^nf-Pf\|_\infty=O(n^{-1/2})$ as $n\to\infty$.
\end{thm}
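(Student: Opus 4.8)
The plan is to adapt the operator-theoretic arguments behind Theorems~\ref{thm:conv} and~\ref{thm:rate} to the present setting, using Lemma~\ref{lem:range_cont}(a) in place of Lemma~\ref{lem:range} and the continuous analogue of Hardy's representation of the powers of $T$. As preliminaries I would record that $T\colon C[0,1]\to C[0,1]$ is a contraction, hence power-bounded, and that $\Fix T$ consists precisely of the constant functions: if $Tf=f$ then $tf(t)=\int_0^t f(s)\,\dd s$, and differentiating gives $tf'(t)=0$ on $(0,1]$, so $f$ is constant. Write $\mathbf 1$ for the constant function with value $1$, so that $\Fix T=\langle\mathbf 1\rangle$ and $Pf=f(0)\mathbf 1$.

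The analytic heart of the proof is the estimate $\|T^n(I-T)\|=O(n^{-1/2})$, which I would obtain exactly as in the proof of Theorem~\ref{thm:rate}. First I would derive the representation
\[
(T^nf)(t)=\frac{1}{(n-1)!}\int_0^1\log(r\inv)^{n-1}f(tr)\,\dd r,\qquad n\ge1,
\]
by iterating the identity $(Tf)(t)=\int_0^1 f(tu)\,\dd u$ (equivalently, by noting that $-\log(U_1\cdots U_n)$ has a $\Gamma(n,1)$-distribution for independent uniform $U_i$ and substituting $r=e^{-s}$). By Theorem~\ref{thm:Dun} it suffices to show that $T_\ga=(1-\ga)\inv(T-\ga I)$ is power-bounded for some $\ga\in(0,1)$. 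Expanding $T_\ga^n=(1-\ga)^{-n}\sum_{j}\binom nj(-\ga)^{n-j}T^j$, recognising the generalised Laguerre polynomial just as in the discrete computation, and substituting $t=\ga\inv\log(r\inv)$, I would arrive at $(T_\ga^nf)(t)=(-\ga/(1-\ga))^n\bigl(f(t)-\int_0^\infty e^{-\ga\tau}L_{n-1}^{(1)}(\tau)f(te^{-\ga\tau})\,\dd\tau\bigr)$. Since $|f(te^{-\ga\tau})|\le\|f\|_\infty$, this gives $\|T_\ga^n\|\le(\ga/(1-\ga))^n\bigl(1+\int_0^\infty e^{-\ga\tau}|L_{n-1}^{(1)}(\tau)|\,\dd\tau\bigr)$, and Lemma~\ref{lem:Laguerre} shows the right-hand side remains bounded for $\ga\in(0,1/2)$. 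Hence $T_\ga$ is power-bounded and $\|T^n(I-T)\|=O(n^{-1/2})$.

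Both conclusions of the theorem then follow. For convergence, I would show that $\Fix T^*=\langle\delta_0\rangle$, where $\delta_0$ is point evaluation at $0$: that $\delta_0$ is fixed is immediate from $(Tf)(0)=f(0)$, while for the converse I would split a fixed measure as $\mu(\{0\})\delta_0+\mu_0$ with $\mu_0$ supported on $(0,1]$ and apply Fubini, reducing $T^*\mu=\mu$ to $\int_{(0,1]}f\,\dd\mu_0=\int_0^1 f(s)\phi(s)\,\dd s$ with $\phi(s)=\int_{[s,1]}t\inv\,\dd\mu_0(t)$; this forces $\dd\mu_0=\phi(s)\,\dd s$ and $\phi(s)=\int_s^1 t\inv\phi(t)\,\dd t$, whence $\phi'=-\phi/s$ together with $\phi\in L^1(0,1)$ gives $\phi\equiv0$ and $\mu_0=0$. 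By the Hahn--Banach theorem the norm-closure $Y$ of $\Ran(I-T)$ equals $(\Fix T^*)_\circ=\Ker\delta_0=\{f:f(0)=0\}$. Since $f-Pf$ vanishes at $0$ for \emph{every} $f\in C[0,1]$, we have $f-Pf\in Y$; as $\|T^n(I-T)\|\to0$, a routine approximation argument using power-boundedness gives $T^n(f-Pf)\to0$, and hence $T^nf-Pf=T^n(f-Pf)\to0$. For the rate, if the integral in~\eqref{eq:imp3} exists, then applying Lemma~\ref{lem:range_cont}(a) to $f-Pf$ (which vanishes at $0$ and for which~\eqref{eq:imp1} coincides with~\eqref{eq:imp3}) gives $f-Pf\in\Ran(I-T)$; writing $f-Pf=(I-T)g$ yields $\|T^nf-Pf\|_\infty=\|T^n(I-T)g\|_\infty=O(n^{-1/2})$.

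I expect the main obstacle to be establishing the integral representation of $T^n$ cleanly and, secondarily, the computation $\Fix T^*=\langle\delta_0\rangle$; once these are in place, the remaining manipulations are verbatim analogues of the sequence case. It is worth noting that the one-dimensionality of $\Fix T^*$ here, in contrast to the two-dimensional $\langle\pi_0,\pi_\infty\rangle$ on $c$, is precisely what allows the convergence statement to hold for all $f\in C[0,1]$ without any side condition.
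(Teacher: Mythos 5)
Your proposal is correct, and the quantified half is essentially the paper's own argument: the same integral representation of $T^n$ (the paper cites~\cite[Lem.~2]{Boy68} rather than deriving it by iteration, but your derivation is fine), the same Laguerre-polynomial computation showing $T_\ga$ is power-bounded for $\ga\in(0,1/2)$, Theorem~\ref{thm:Dun}, and Lemma~\ref{lem:range_cont}(a) applied to $f-Pf$. Where you genuinely diverge is in the unquantified statement. The paper identifies the closure $Y$ of $\Ran(I-T)$ with $\{f:f(0)=0\}$ by citing a Weierstrass-approximation argument from~\cite[Prop.~2.4(i)]{AlBoRi15}, then invokes the spectrum computation $\sigma(T)=\{z:|z-\tfrac12|\le\tfrac12\}$ and the Katznelson--Tzafriri theorem to get $\|T^n(I-T)\|\to0$. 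You instead compute $\Fix T^*=\langle\delta_0\rangle$ by a direct measure-theoretic argument (which checks out: Fubini is justified since $\int_0^1\int_{[s,1]}t\inv\,\dd|\mu_0|(t)\,\dd s=|\mu_0|((0,1])<\infty$, and the only $L^1$ solution of $\phi(s)=\int_s^1 t\inv\phi(t)\,\dd t$ is $\phi\equiv0$), deduce $Y=(\Fix T^*)_\circ=\Ker\delta_0$ by duality, and then simply reuse the already-established estimate $\|T^n(I-T)\|=O(n^{-1/2})$ in place of Theorem~\ref{thm:KT}. This buys you a self-contained proof that needs neither the spectrum of $T$ on $C[0,1]$ nor the Katznelson--Tzafriri theorem for this particular operator, at the cost of an explicit dual-fixed-point computation that the paper outsources to the literature; your closing observation that the one-dimensionality of $\Fix T^*$ is what removes the side condition present in the sequence case is a nice conceptual point that the paper leaves implicit.
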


\begin{proof}
 It follows from the fundamental theorem of calculus that $\Fix T$ consists precisely of all constant functions on $[0,1]$. Moreover, a standard application of the Weierstrass approximation theorem shows that the closure $Y$ of $\Ran(I-T)$ is given by $Y=\{f\in C[0,1]:f(0)=0\}$; see~\cite[Prop.~2.4(i)]{AlBoRi15}. Hence $C[0,1]=\Fix T\oplus Y$. Since  $\sigma(T)=\{z\in\CC:|z-\frac12|\le\frac12\}$ by~\cite[Prop.~2.1]{AlBoRi15} it follows from Theorem~\ref{thm:KT}, as in the proof of Theorem~\ref{thm:conv}, that $\|T^nf\|_\infty\to0$ as $n\to\infty$ for all $f\in Y$. Finally, given an arbitrary $f\in C[0,1]$, we have $Pf\in\Fix T$ and $f-Pf\in Y$, and hence $\|T^nf-Pf\|_\infty=\|T^n(f-Pf)\|_\infty\to0$ as $n\to\infty$, as required. 
We now turn to the quantified statement. By~\cite[Lem.~2]{Boy68} we have
$$(T^nf)(t)=\frac{1}{(n-1)!}\int_0^1\log(s\inv)^{n-1}f(st)\,\dd s$$
for all $f\in C[0,1]$,  $t\in[0,1]$ and $n\ge1$. Given $\alpha\in(0,1)$ let $T_\alpha=(1-\ga)\inv(T-\ga I)$. A calculation similar to that in the proof of Theorem~\ref{thm:rate} shows that 
$$
(T_\alpha^nf)(t)=\left(\frac{-\alpha}{1-\alpha}\right)^n\bigg(f(t)-\int_0^\infty e^{-\alpha s}L_{n-1}^{(1)}(s)f(te^{-\alpha s})\,\dd s\bigg)
$$
for all $f\in C[0,1]$,  $t\in[0,1]$, $n\ge1$ and $\ga\in(0,1)$, and hence
$$\|T_\alpha^n\|\le  \left(\frac{\ga}{1-\ga}\right)^n\left(1+\int_0^\infty e^{-\ga s}|L_{n-1}^{(1)}(s)|\,\dd s\right)$$
for all $n\ge1$ and $\ga\in(0,1)$.   By Lemma~\ref{lem:Laguerre} the operator $T_\alpha$ is power-bounded for $\alpha\in(0,1/2)$, and hence Theorem~\ref{thm:Dun} gives $\|T^n(I-T)\|=O(n^{-1/2})$ as $n\to\infty$. Thus if $f\in C[0,1]$ is such that the improper Riemann integral in~\eqref{eq:imp3} exists, then $f-Pf\in\Ran(I-T)$ by  Lemma~\ref{lem:range_cont}(a), so $\|T^nf-Pf\|_\infty=\|T^n(f-Pf)\|_\infty=O(n^{-1/2})$ as $n\to\infty$, as required.
\end{proof}

\begin{rems}\phantomsection \label{rem:int}
\begin{enumerate}[(a)]
\item In the first part of the proof establishing the unquantified statement we may alternatively deduce the result directly from the Weierstrass approximation theorem and a standard approximation argument as in~\cite[Thm.~3]{GalSol08}, without appealing to Theorem~\ref{thm:KT}.
\item Appropriate analogues of Propositions~\ref{prp:opt} and~\ref{prp:rapid} hold also for the Cesàro operator on $C[0,1]$, with near-identical proofs.
\item Let $B_0[0,1]$ denote the complex vector space of bounded measurable functions $f\colon[0,1]\to\CC$ which are continuous at zero. Then $B_0[0,1]$ is a Banach space when endowed with the supremum norm, and the Cesàro operator extends to a well-defined bounded linear operator $T\colon B_0[0,1]\to B_0[0,1]$. Furthermore, the following variant of the quantified statement in Theorem~\ref{thm:interval} holds: If $f\in B_0[0,1]$ is such that the  integral in~\eqref{eq:imp3} exists as an improper \emph{Lebesgue} integral then $\|T^nf-Pf\|_\infty=O(n^{-1/2})$ as $n\to\infty$, where $(Pf)(t)=f(0)$ for all $t\in[0,1]$ as before.
The proof requires only a straightforward modification of Lemma~\ref{lem:range_cont}(a) based on the fundamental theorem of calculus for the Lebesgue integral; see for instance~\cite[Thm.~3.11]{SteSha05}.
\item The same argument as in the proof of Theorem~\ref{thm:interval} shows that $\|T^n(I-T)\|=O(n^{-1/2})$ as $n\to\infty$ when $T$ is the Cesàro operator on $L^\infty(0,1)$, defined as in~\eqref{eq:Cesaro_cont} for all $f\in L^\infty(0,1)$ and almost all $t\in(0,1)$. By another simple modification of Lemma~\ref{lem:range_cont}(a) we then find that if $f\in L^\infty(0,1)$ and there exists a constant $\lambda\in\CC$ such that 
\begin{equation}\label{eq:sup_int}
\sup_{0<\ep\le1}\bigg|\int_\ep^1\frac{f(t)-\lambda}{t}\,\dd t\bigg|<\infty,
\end{equation}
then 
$\|T^nf-g\|_\infty=O(n^{-1/2})$ as $n\to\infty$, where $g\in L^\infty(0,\infty)$ is the function defined by $g(t)=\lambda$ for almost all $t\in(0,1)$.
\end{enumerate}
\end{rems}

Finally, we consider the Cesàro operator on $C_\infty[0,\infty)$ and describe the asymptotic behaviour of its orbits. As in the previous case, we define $P\colon C_\infty[0,\infty)\to C_\infty[0,\infty)$ by $(Pf)(t)=f(0)$ for all $f\in C_\infty[0,\infty)$ and $t\ge0$.  Note that
 the equivalence between (i) and (iii) in part~(a) was first proved, by a different method, in~\cite[Thm.~4]{GalSol08}; the relationship between the quantified statement in part~(b) and the results obtained in~\cite{AdeLek10} is discussed in the subsequent remarks.

\begin{thm}\label{thm:half}
Let $T\colon C_\infty[0,\infty)\to C_\infty[0,\infty)$ be the Cesàro operator. 
\begin{enumerate}[(a)]
\item[\textup{(a)}]
Let $f\in C_\infty[0,\infty)$. The following are equivalent:
\begin{enumerate}[(i)]
\item[\textup{(i)}] $f(0)=\lim_{t\to\infty}f(t)$;
\item[\textup{(ii)}] $(T^nf)_{n\ge0}$ converges weakly as $n\to\infty$;
\item[\textup{(iii)}] $(T^nf)_{n\ge0}$ converges in norm as $n\to\infty$.
\end{enumerate}
Furthermore, if these conditions hold then 
\begin{equation}\label{eq:limit_cont}
\|T^nf-Pf\|_\infty\to0,\qquad n\to\infty.
\end{equation}

\item[\textup{(b)}] If $f\in C[0,1]$ is such that $f(0)=\lim_{t\to\infty}f(t)$ and the improper Riemann integral
\begin{equation}\label{eq:imp4}
\int_0^\infty\frac{f(t)-f(0)}{t}\,\dd t
\end{equation}
exists, then $\|T^nf-Pf\|_\infty=O(n^{-1/2})$ as $n\to\infty$.
\end{enumerate}
\end{thm}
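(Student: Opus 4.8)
The plan is to follow the template established in the proofs of Theorems~\ref{thm:conv} and~\ref{thm:interval}, adapting each ingredient to the half-line. For part~(a) I would first identify the two invariant pieces. The fundamental theorem of calculus shows that $\Fix T$ consists exactly of the constant functions, so $\Fix T=\langle\mathbf 1\rangle$, where $\mathbf 1$ denotes the constant function with value $1$. Next I would establish that the norm-closure $Y$ of $\Ran(I-T)$ equals $\{f\in C_\infty[0,\infty):f(0)=\lim_{t\to\infty}f(t)=0\}$. The inclusion $\subseteq$ is immediate from Lemma~\ref{lem:range_cont}(b), while for the reverse inclusion I would argue by density: given $g$ with $g(0)=\lim_{t\to\infty}g(t)=0$ and $\ep>0$, continuity at $0$ together with the existence of the limit at infinity lets me approximate $g$ uniformly to within $\ep$ by a continuous function supported in some compact interval $[a,b]\subseteq(0,\infty)$, and any such function lies in $\Ran(I-T)$ by Lemma~\ref{lem:range_cont}(b) since its integral against $t\inv$ is then proper.

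With these two facts in hand the equivalences follow as before. Since $T$ is a contraction and its spectrum is again the disc $\{z\in\CC:|z-\frac12|\le\frac12\}$, so that $\sigma(T)\cap\TT=\{1\}$, Theorem~\ref{thm:KT} gives $\|T^n(I-T)\|\to0$ and hence $\|T^ng\|_\infty\to0$ for every $g\in Y$ by a routine approximation argument (using $\|T^n\|\le1$). The implication (iii)$\implies$(ii) is trivial; for (ii)$\implies$(i) I would note that any weak limit lies in $\Fix T$, so is constant, and that the functionals $\delta_0\colon f\mapsto f(0)$ and $\delta_\infty\colon f\mapsto\lim_{t\to\infty}f(t)$ are fixed by $T^*$ — the latter because $\lim_{t\to\infty}\frac1t\int_0^tf(s)\,\dd s=\lim_{t\to\infty}f(t)$ — so passing to the weak limit forces $f(0)=\lim_{t\to\infty}f(t)$. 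Finally, if (i) holds then $f-Pf\in Y$, whence $\|T^nf-Pf\|_\infty=\|T^n(f-Pf)\|_\infty\to0$, giving both (iii) and~\eqref{eq:limit_cont}.

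For part~(b) I would proceed exactly as in the quantified half of Theorem~\ref{thm:interval}. Boyd's representation \cite[Lem.~2]{Boy68}, namely $(T^nf)(t)=\frac{1}{(n-1)!}\int_0^1\log(s\inv)^{n-1}f(st)\,\dd s$, is purely local in $t$ and so holds verbatim on $C_\infty[0,\infty)$; substituting and expanding $T_\ga=(1-\ga)\inv(T-\ga I)$ yields
$$(T_\ga^nf)(t)=\left(\frac{-\ga}{1-\ga}\right)^n\bigg(f(t)-\int_0^\infty e^{-\ga s}L_{n-1}^{(1)}(s)f(te^{-\ga s})\,\dd s\bigg)$$
and therefore the bound $\|T_\ga^n\|\le\left(\frac{\ga}{1-\ga}\right)^n\bigl(1+\int_0^\infty e^{-\ga s}|L_{n-1}^{(1)}(s)|\,\dd s\bigr)$. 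Lemma~\ref{lem:Laguerre} then makes $T_\ga$ power-bounded for $\ga\in(0,1/2)$, so Theorem~\ref{thm:Dun} gives $\|T^n(I-T)\|=O(n^{-1/2})$. Under the hypotheses of~(b) we have $(f-Pf)(0)=\lim_{t\to\infty}(f-Pf)(t)=0$ and the improper integral of $(f-Pf)(t)/t$ is precisely the one in~\eqref{eq:imp4}, so $f-Pf\in\Ran(I-T)$ by Lemma~\ref{lem:range_cont}(b); writing $f-Pf=(I-T)h$ and using $Pf\in\Fix T$ gives $\|T^nf-Pf\|_\infty=\|T^n(I-T)h\|_\infty=O(n^{-1/2})$.

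I expect the only steps requiring genuine care to be the identification of $Y$ via the density argument and the verification that the ingredients imported from the compact-interval setting — Boyd's formula and the spectral computation — transfer to the half-line; everything else runs in parallel with the earlier proofs. The subtlest points are confirming that $\delta_\infty$ is $T^*$-invariant and that the approximation used for $Y$ respects the vanishing condition at infinity, since these encode the extra \emph{boundary point at infinity} that is absent in the $C[0,1]$ case.
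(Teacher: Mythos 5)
Your proof is correct and follows essentially the same route as the paper's: the Katznelson--Tzafriri theorem applied to the decomposition into $\Fix T$ and the closure of $\Ran(I-T)$ for part~(a), and Boyd's representation, Lemma~\ref{lem:Laguerre}, Theorem~\ref{thm:Dun} and Lemma~\ref{lem:range_cont}(b) for part~(b). The only (harmless) deviation is that you identify the closure of $\Ran(I-T)$ by a direct cut-off/density argument where the paper simply cites \cite[Thm.~2.6]{AlBoRi15}, and your density argument is sound.
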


\begin{proof}
In order to prove part~(a) let $f\in C_\infty[0,\infty)$. It is clear that (iii)$\implies$(ii), so suppose that~(ii) holds and let $g\in  C_\infty[0,\infty)$ be such that $T^nf\to g$ weakly as $n\to\infty$. Then $g\in\Fix T$ and an application of the fundamental theorem of calculus shows that $g$ must be constant.  Define the functionals $\pi_0,\pi_\infty\in C_\infty[0,\infty)^*$ by $\pi_0(f)=f(0)$ and $\pi_\infty(f)=\lim_{t\to\infty}f(t)$. Then $\pi_0,\pi_\infty\in\Fix T^*$,  and it follows from weak convergence of the sequence $(T^nf)_{n\ge0}$ that $\pi_0(f)=\pi_0(g)$ and $\pi_\infty(f)=\pi_\infty(g)$. Since $g$ is constant we deduce that $f(0)=\lim_{t\to\infty}f(t)$, as required. The argument also shows that $g=Pf$.  Now suppose that~(i) holds. Then $f-Pf$ lies in the closure of $\Ran(I-T)$ by~\cite[Thm.~2.6]{AlBoRi15}. Furthermore, by~\cite[Prop.~2.2]{AlBoRi15} the spectrum of $T$ satisfies $\sigma(T)=\{z\in\CC:|z-\frac12|=\frac12\}$. It follows from Theorem~\ref{thm:KT}, as in the proofs of Theorems~\ref{thm:conv} and~\ref{thm:interval}, that $\|T^nf-Pf\|_\infty=\|T^n(f-Pf)\|_\infty\to0$ as $n\to\infty$, establishing that (i)$\implies$(iii). The argument also shows that the limit, if it exists, is of the form given in~\eqref{eq:limit_cont}.

We now turn to part~(b). If $f\in C_\infty[0,\infty)$ is such that $f(0)=\lim_{t\to\infty}f(t)$ and the improper Riemann integral in~\eqref{eq:imp4} exists then $f-Pf\in\Ran(I-T)$ by part~(b) of Lemma~\ref{lem:range_cont}. Arguing exactly as in Theorem~\ref{thm:interval} we may use Theorem~\ref{thm:Dun} to show that $\|T^n(I-T)\|=O(n^{-1/2})$ as $n\to\infty$, and the result now follows as in the proofs of Theorems~\ref{thm:rate} and~\ref{thm:interval}.
\end{proof}

\begin{rems}
\begin{enumerate}[(a)]
\item As in the case of the Cesàro operator on $C[0,1]$, appropriate analogues of Propositions~\ref{prp:opt} and~\ref{prp:rapid} hold also for the Cesàro operator on $C_\infty[0,\infty)$.
\item Let $B_{0,\infty}[0,\infty)$ denote the complex vector space of bounded measurable functions $f\colon[0,\infty)\to\CC$ which are continuous at zero and converge to a limit at infinity. Then $B_{0,\infty}[0,\infty)$ is a Banach space when endowed with the supremum norm, and the Cesàro operator extends to a well-defined bounded linear operator $T\colon B_{0,\infty}[0,\infty)\to B_{0,\infty}[0,\infty)$. The following variant of Theorem~\ref{thm:half}(b) holds in this case: If $f\in B_{0,\infty}[0,\infty)$ is such that $f(0)=\lim_{t\to\infty}f(t)$ and the integral in~\eqref{eq:imp4} exists as an improper Lebesgue integral, then $\|T^nf-Pf\|_\infty=O(n^{-1/2})$ as $n\to\infty$, where $(Pf)(t)=f(0)$ for all $t\ge0$. As before, the proof requires only a straightforward modification of Lemma~\ref{lem:range_cont}(b).
\item We may compare this result for the Cesàro operator on $B_{0,\infty}[0,\infty)$ with~\cite[Thm.~1.1]{AdeLek10}, where the same decay rate  is obtained for (real-valued) functions $f\in B_{0,\infty}[0,\infty)$ satisfying $f(0)=\lim_{t\to\infty}f(t)$ and an integrability condition for a certain symmetrised version of the function $f$. This integrability condition immediately implies that 
\begin{equation}\label{eq:integrable}
\int_0^\infty\frac{|f(t)-f(0)|}{t}\,\dd t<\infty.
\end{equation}
Note that even~\eqref{eq:integrable} is strictly stronger than requiring the integral in~\eqref{eq:imp4} to exist as an improper Lebesgue integral. For instance, the function $f\in C_\infty[0,\infty)\subseteq B_{0,\infty}[0,\infty)$ defined by
$$f(t)=\frac{\sin(t)}{\log(2+t)},\qquad t\ge0,$$
satisfies $f(0)=\lim_{t\to\infty}f(t)=0$. Moreover, the integral in~\eqref{eq:imp4} exists as an improper Riemann integral, and hence as an improper Lebesgue integral, but \eqref{eq:integrable} does not hold. It follows that the result in part~(b) above applies to a strictly larger class of functions $f\in B_{0,\infty}[0,\infty)$ than~\cite[Thm.~1.1]{AdeLek10}, and in fact even part~(b) of Theorem~\ref{thm:half} applies to functions $f\in C_\infty[0,\infty)$ which are not covered by~\cite[Thm.~1.1]{AdeLek10}.
\item Analogous comments to those made in Remark~\ref{rem:int}(d) for the Cesàro operator on $L^\infty(0,1)$ apply to the Cesàro operator on $L^\infty(0,\infty)$. In particular, if $f\in L^\infty(0,\infty)$ and there exists $\lambda\in\CC$ such that
$$\sup_{0<\ep\le R}\bigg|\int_\ep^R\frac{f(t)-\lambda}{t}\,\dd t\bigg|<\infty,$$
then 
$\|T^nf-g\|_\infty=O(n^{-1/2})$ as $n\to\infty$, where $g\in L^\infty(0,\infty)$ is defined by $g(t)=\lambda$ for almost all $t>0$.
\end{enumerate}
\end{rems}


\end{document}